\theoremstyle{plain}
\newtheorem{theorem}{Theorem}[section]
\newtheorem{lemma}[theorem]{Lemma}
\newtheorem{proposition}[theorem]{Proposition}
\newtheorem{corollary}[theorem]{Corollary}
\newtheorem{question}[theorem]{Question}
\newtheorem{conjecture}[theorem]{Conjecture}
\theoremstyle{definition}
\newtheorem{definition}[theorem]{Definition}
\newtheorem{convention}[theorem]{Convention}
\newtheorem{notation}[theorem]{Notation}
\newtheorem{remark}[theorem]{Remark}
\theoremstyle{remark}
\newtheorem{example}[theorem]{Example}
\newcommand{\I}{\mathcal{I}}
\newcommand{\F}{\mathcal{F}}
\newcommand{\alg}{\mathbb{R}_\mathrm{alg}}
\newcommand{\anexp}{\mathbb{R}_\mathrm{an,exp}}
\newcommand{\poinca}{\mathbb{H}}
\newcommand{\Z}{\mathbb Z}
\newcommand{\Q}{\mathbb Q}
\newcommand{\C}{\mathbb C}
\newcommand{\R}{\mathbb R}
\newcommand{\V}{\mathbb V}
\newcommand{\p}{\mathbb P}
\newcommand{\s}{\mathbb S}
\newcommand{\sm}{\mathrm{sm}}
\newcommand{\ppabel}{\mathcal{A}}
\newcommand{\cur}{\mathcal{M}}
\newcommand{\torelli}{\mathcal{T}}
\newcommand{\halfplane}{\mathbb H}
\newcommand{\mult}{\mathbb G}
\newcommand{\G}{\mathbf G}
\newcommand{\mon}{\mathbf H}
\newcommand{\M}{\mathbf M}
\newcommand{\LL}{\mathbf L}
\newcommand{\NN}{\mathbf N}
\newcommand{\MT}{\mathbf{MT}}
\newcommand{\HL}{\mathrm{HL}(S, \V^\otimes)}
\newcommand{\HLM}{\mathrm{HL}(S, \V^\otimes, \M)}
\newcommand{\pr}{\mathrm{pr}}
\newcommand{\an}{\mathrm{an}}
\newcommand{\Hom}{\mathrm{Hom}}
\newcommand{\h}{\mathfrak{h}}
\newcommand{\quo}{\backslash}
\newcommand{\codim}{\mathrm{codim}}
\newcommand{\gl}{\mathbf{GL}}
\newcommand{\gsp}{\mathbf{GSp}}
\newcommand{\ssp}{\mathbf{Sp}}
\newcommand{\res}{\mathrm{Res}}
\newcommand{\ad}{\mathrm{ad}}
\newcommand{\typ}{\mathrm{typ}}
\newcommand{\atyp}{\mathrm{atyp}}
\newcommand{\der}{\mathrm{der}}
\title{Existence and Density of typical Hodge Loci}
\author{Nazim Khelifa}
\author{David Urbanik}
\date{\today}
\begin{document}
\maketitle
\begin{abstract}
Motivated by a question of Baldi-Klingler-Ullmo, we provide a general sufficient criterion for the existence and analytic density of typical Hodge loci associated to a polarizable $\mathbb{Z}$-variation of Hodge structures $\mathbb{V}$. Our criterion reproves the existing results in the literature on density of Noether-Lefschetz loci. It also applies to understand Hodge loci of subvarieties of $\mathcal{A}_{g}$. For instance, we prove that for $g \geqslant 4$, if a subvariety $S$ of $\mathcal{A}_{g}$ has dimension at least $g$ then it has an analytically dense typical Hodge locus. This applies for example to the Torelli locus of $\mathcal{A}_g$. . 
\end{abstract}
\begin{center}
\tableofcontents
\end{center}
\section{Introduction}
\subsection{Context} The observation that Hodge loci of polarizable $\Z$-variations of Hodge structures ($\Z$-VHS) can be realized as intersection loci has recently been fruitfully applied to obtain refinements of the celebrated algebraicity result of Cattani-Deligne-Kaplan \cite{cdk, bkt}. For instance, if $\V$ is a $\Z$-VHS on a smooth, irreducible and quasi-projective variety $S$ over $\C$, one would like to understand criteria for density, in both the Zariski and analytic topology, for families of components of the Hodge locus $\HL$. With this goal in mind Klingler \cite{klingler_atyp} introduced a dichotomy, later refined by Baldi-Klingler-Ullmo, between the part of the Hodge locus corresponding to typical intersections and the part corresponding to atypical ones (\cite[Def. 2.2, 2.4]{bku}):
\[
\HL = \HL_\typ \cup \HL_\atyp.
\]
These two parts are expected to behave very differently. On the one hand, the atypical part is conjectured to be algebraic:

\begin{conjecture}[{\cite[Conj. 2.5]{bku}}]
\label{zilberpink}
Let $\V$ be a polarizable $\Z$-VHS on a smooth, irreducible and quasi-projective variety $S$ over $\C$. The atypical Hodge locus $\HL_\atyp$ of $S$ for $\V$ is an algebraic suvariety of $S$.
\end{conjecture}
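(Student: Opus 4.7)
The statement is presented as an open conjecture of Baldi-Klingler-Ullmo, so strictly speaking there is no full proof to reproduce; what I can offer is a sketch of the approach that the current literature suggests is the most promising, and which underlies the partial results that are known.

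The plan is to combine the functional transcendence technology for period maps with a careful bookkeeping of Mumford-Tate groups. First, I would fix a $\Z$-VHS $\V$ on $S$ and consider its period map $\ph : S^\an \to \Gamma \quo \period$ into the relevant period domain, together with the enlarged period map into $\Gamma \quo \period \times S$. By work of Bakker-Klingler-Tsimerman, this enlarged period map is definable in the o-minimal structure $\R_{\mathrm{an}, \exp}$, and one has at one's disposal an Ax-Schanuel theorem for period maps. The next step is to recast atypical Hodge components as atypical intersections of the graph of $\ph$ with weak Mumford-Tate subvarieties: a component $Y \subset \HL$ is atypical precisely when the algebraic monodromy of $\V|_Y$, viewed inside the generic Mumford-Tate group $\M$ on $S$, has codimension strictly larger than expected.

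The second step is to stratify $\HL_\atyp$ according to the conjugacy class of the Mumford-Tate group of the generic Hodge structure on each component. There are only countably many such conjugacy classes of reductive $\Q$-subgroups of $\M$, but one needs much more than countability: one needs that the relevant classes contributing actual atypical components form a finite set. This is where Ax-Schanuel enters decisively. Following the strategy used by Baldi-Ullmo-Yafaev in the Shimura setting, and by Baldi-Klingler-Ullmo for VHS, I would argue that for each fixed Mumford-Tate type one obtains a definable subset of $S$ whose atypical components are controlled by intersecting with a locally closed algebraic condition coming from the Hodge tensors cutting out the subgroup; Ax-Schanuel then forces these components to be algebraic. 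The final step is to combine these algebraic pieces into a single algebraic subvariety, using a Noetherian argument to promote a locally finite union of algebraic subvarieties (that in principle might be only countable) to an actual closed algebraic subset.

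The main obstacle, and the reason the full conjecture remains open, is this last finiteness step: one needs to rule out an infinite nested family of atypical components of ever-increasing codimension whose Mumford-Tate groups are pairwise non-conjugate. In the Shimura case one can leverage heights and arithmetic, but for a general VHS one lacks such tools, and Ax-Schanuel alone only bounds the dimension of atypical components rather than their number. A secondary difficulty lies in passing from a definable, complex-analytic description of each stratum to an actual algebraic one without appealing to $\mathrm{CDK}$ in a circular way; for strata corresponding to \emph{maximal} atypical Mumford-Tate groups this can be done, but controlling the strata in between seems to require a substantially new idea, and it is presumably for this reason that the authors of the present paper focus instead on the typical locus.
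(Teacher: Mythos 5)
This statement is an open conjecture quoted verbatim from Baldi--Klingler--Ullmo; the paper offers no proof of it (and indeed describes it as ``entirely out of reach'' for atypical points), so there is nothing in the paper to compare your argument against. You correctly recognize this, and your sketch is a fair account of the known partial results: the Ax--Schanuel-based ``geometric Zilber--Pink'' theorem of Baldi--Klingler--Ullmo does show that the positive-period-dimensional part of $\HL_\atyp$ is a finite union of maximal atypical special subvarieties, and the genuinely open part is the arithmetic control of atypical special points, exactly the gap you identify at the end. No further action is needed here; just be clear in your write-up that what you are giving is a survey of a strategy for an open problem, not a proof.
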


Indeed, Conjecture \ref{zilberpink} can be seen as an extension of the classical Zilber-Pink conjecture (see \cite[Conj. 2.3]{pila},\cite[Conj. 1.1]{pink} and \cite{zilber}). The case of positive period dimensional special subvarieties has been settled for variations with $\Q$-simple derived Mumford-Tate group in \cite[Thm. 3.1]{bku}. The case of atypical points appears for now to be entirely out of reach in full generality. Understanding the typical Hodge locus, however, appears more accessible, and towards this end Baldi-Klingler-Ullmo conjecture the following:

\begin{conjecture}[{\cite[Conj. 2.7]{bku}}]
\label{typdenseconj}
Let $\V$ be a polarizable $\Z$-VHS on a smooth, irreducible and quasi-projective variety $S$ over $\C$. If $\HL_\typ$ is non-empty, $\HL_\typ$ is analytically (hence Zariski) dense in $S$.
\end{conjecture}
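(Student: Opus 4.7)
The plan is to exploit the action of the real algebraic monodromy group $\mon(\R)$ on the period domain to produce, from a single component of $\HL_\typ$, a dense family of further components. Let $\tph : \suniv \to \period$ denote the lift of the period map to the universal cover, and let $\mon$ be the $\Q$-algebraic monodromy of $\V$. A component $Y$ of $\HL_\typ$ corresponds, after passing to $\suniv$, to a component of $\tph^{-1}(\period')$ for some Mumford-Tate subdomain $\period' \subseteq \period$ attached to a $\Q$-subgroup $\M \subseteq \mon$; typicality of $Y$ amounts to the statement that this analytic intersection is proper, i.e.\ has codimension exactly $\codim_\period \period'$.

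The first step is to perturb. For $g$ in a sufficiently small neighborhood of the identity in $\mon(\R)$, the translated subdomain $g \period'$ still meets $\tph(\suniv)$ properly, by openness of proper intersection for complex-analytic subsets of the right codimension. As $g$ varies in this neighborhood, the intersections $\tph(\suniv) \cap g\period'$ sweep out a germ of an analytic open set around a chosen lift of $Y$, and hence their images in $S^\an$ cover an open neighborhood of $Y$.

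The second step is to descend the real translates to genuine Hodge components defined over $\Q$. For $g \in \mon(\Q)$, the conjugate $g\M g^{-1}$ remains a $\Q$-subgroup of $\mon$, so $g \period'$ is itself a Mumford-Tate subdomain and any non-empty proper intersection $\tph(\suniv) \cap g\period'$ projects to a component of $\HL_\typ$. Using real approximation, which gives density of $\mon(\Q)$ in $\mon(\R)$ for the connected algebraic group $\mon$, one upgrades the real-parametric family from the first step into a $\mon(\Q)$-indexed family of typical Hodge components whose union is analytically dense in an open neighborhood of $Y$. Iterating by enlarging the class of admissible $g \in \mon(\R)$ and then descending, one attempts to cover all of $S^\an$.

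The main obstacle is this last globalization: there is no a priori reason why, at the level of generality of Conjecture \ref{typdenseconj}, the $\mon(\R)$-saturation of $\period'$ inside $\period$ should be large enough to meet $\tph(\suniv)$ in a set whose $S$-image is dense, rather than merely contained in an open neighborhood of the original $Y$. One expects that a positivity or sufficient-dimension hypothesis on $\tph(\suniv)$ relative to $\mon$ is needed to ensure that $\mon(\R)\cdot \period'$ intersects the image of the period map transversely at a dense set of points; this is plausibly the role played by the sufficient criterion announced in the abstract. Producing such a hypothesis in an intrinsic and checkable form, and reducing the full conjecture to it, is the essential content I would expect any approach to this statement to supply.
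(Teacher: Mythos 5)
What you are attempting is Conjecture \ref{typdenseconj}, which is quoted from Baldi--Klingler--Ullmo and is \emph{not} proved in this paper: it is stated as an open conjecture, and the paper's actual theorems (\ref{mainqsimple} and \ref{main}) establish density only under an additional numerical hypothesis, namely (factorwise, strong) $\V$-admissibility of the sub-datum $(\M, D_M)$. Your perturbation engine --- translating the sub-domain by $g$ near a fixed element, invoking openness of the expected-dimension condition, and replacing real translates by rational ones via density of $\G(\Q)^+$ in $\G(\R)^+$ --- is essentially the mechanism of the paper's proof of Theorem \ref{main}, so your strategy is on target for what is actually provable; but it does not close the conjecture, and you correctly sense this at the end.

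Beyond the globalization issue you flag, there is a second genuine gap. In your descent step you assert that any expected-dimension intersection of $\tph(\suniv)$ with a rational translate $g\period'$ projects to a component of $\HL_\typ$. This is unjustified: such a component lies in $\HLM$, but its own generic Mumford--Tate group may be a proper subgroup of $g \M g^{-1}$, in which case the component can be atypical for its \emph{actual} generic Hodge datum even though the ambient intersection has the expected dimension for $D_M$. Excluding this is the hardest part of the paper's proof of typicality: one must discard, from the parameter space of translates, the definable locus where the intersection is already achieved by a strictly smaller Mumford--Tate domain, and showing that this locus has smaller definable dimension uses Ax--Schanuel (Theorem \ref{axschan}) together with \emph{strong} admissibility; the borderline case where the expected dimension is exactly zero remains open (Conjecture \ref{qsimpleconj}). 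As for globalization: the existence of one typical component yields only the admissibility inequality for its full Hodge sub-datum, not the factorwise inequalities required when $\mon$ is not $\Q$-simple, and Example \ref{factorwisexample} exhibits a strongly $\V$-admissible sub-datum whose typical Hodge locus should \emph{not} be dense. Hence no purely local argument moving the given component $Y$ around can succeed in general; the paper's substitute is to run the construction at every Hodge generic point and to use Ax--Schanuel plus factorwise admissibility to force the expected dimension there. That hypothesis is precisely the ``sufficient-dimension'' condition you anticipate, and with current technology it cannot be dispensed with.
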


Towards this conjecture they prove:

\begin{theorem}[{\cite[Thm. 3.9]{bku}}]
\label{typedensebku}
    Let $\V$ be a polarizable $\Z$-VHS on a smooth, irreducible and quasi-projective variety $S$ over $\C$. If the typical Hodge locus $\HL_\typ$ is non-empty, then $\HL$ is analytically (hence Zariski) dense in $S$.
\end{theorem}

Conjecture \ref{typdenseconj} and Theorem \ref{typedensebku} naturally raise the following question asked by Baldi-Klingler-Ullmo:

\begin{question}[{\cite[Quest. 2.9]{bku}}]
\label{criterionquestion}
Is there a simple criterion on the generic Hodge datum of a polarizable $\Z$-VHS to decide if its typical Hodge locus is non-empty?
\end{question}

Regarding this question, they introduce the following notion (see \cite[Sect. 4.6]{bku} for details):

\begin{definition}[{\cite[Def. 4.15]{bku}}]\label{level}
Let $\V$ be a polarizable $\Z$-VHS on a smooth, irreducible and quasi-projective variety $S$ over $\C$ with generic Hodge datum $(\G,D)$ and $\Q$-simple algebraic monodromy group $\mon$. Denote by $\h$ its Lie algebra, fix $x \in D$ a Hodge generic point and denote by
\[
\h \otimes_\Q \C = \bigoplus_{k \in \Z} \h_x^{-k,k}
\]
the induced Hodge decomposition. The \textit{level of $\V$} is the largest integer $k$ such that $\h_x^{-k,k} \neq \{0\}$. It doesn't depend on the choice of $x$.
\end{definition}
They almost completely answer Question \ref{criterionquestion} when the level of the $\Z$-VHS is at least $3$, by proving:

\begin{theorem}[{\cite[Thm. 3.3]{bku}}]
\label{levelcrit}
Let $\V$ be a polarizable $\Z$-VHS on a smooth, irreducible and quasi-projective variety $S$ over $\C$, with generic Hodge datum $(\G,D)$ and algebraic monodromy group $\mon$. Suppose that $\mon = \G^\der$. If $\V$ is of level at least $3$ then $\HL_\typ = \emptyset$ (and thus $\HL = \HL_\atyp$).
\end{theorem}

\subsection{Our Results} The main contribution of this work will be to provide a sufficient criterion for the density of (typical) Hodge loci, valid in any level. As we will explain, this almost gives a complete answer to Question \ref{criterionquestion}.

\subsubsection{Density of typical Hodge loci} Let $\V$ be a polarizable $\Z$-VHS on a smooth, irreducible and quasi-projective variety $S$ over $\C$ and let $(\G,D)$ be its generic Hodge datum. Note that after passing to a finite étale cover of the base (which doesn't affect our results, see Remark \ref{zariskiopen}), we can and will assume that the image of the monodromy representation of $\V$ is torsion-free. We therefore fix a torsion-free arithmetic lattice $\Gamma$ of $\G(\R)$ containing the latter and denote by $\Phi : S^\an \rightarrow \Gamma \quo D$ be the period map associated to $\V$. Given a strict Hodge sub-datum $(\M, D_M) \subsetneq (\G,D)$ we define the \textit{Hodge locus of type $\M$} as
\[
\HLM := \{s \in S^\an \hspace{0.1cm} : \hspace{0.1cm} \exists g \in \G(\Q)^+, \MT(\V_s) \subseteq g \M g^{-1} \},
\]
where for $s \in S^\an$, the Mumford-Tate group of the Hodge structure on $\V_s$ is denoted by $\MT(\V_s)$. The \textit{typical Hodge locus of type $\M$} is defined as the union $\HLM_\typ$ of those components of $\HLM$ which are typical with respect to their generic Hodge datum; see Definition \ref{typdef}.

\begin{definition}\label{expected}
Let $X,Y$ be locally closed irreducible analytic subvarieties of some irreducible complex analytic variety $Z$. Let $U$ be an analytic irreducible component of the intersection $X \cap Y$. We say that \textit{$X$ and $Y$ intersect in $Z$ with bigger dimension than expected along $U$} if
\[
\dim U > \dim X + \dim Y - \dim Z.
\]
Otherwise, we say that they \textit{intersect in $Z$ with expected dimension along $U$}.
\end{definition}

In this terminology, a special subvariety $Z$ of $S$ with generic Hodge datum $(\G_Z, D_{G_Z})$ is typical if it corresponds to a component of the intersection between $\Gamma_{\G_Z} \quo D_{G_Z}$ and $\Phi(S^\an)$ along which these two intersect with expected dimension inside $\Gamma \quo D$. Here $\Gamma_{\G_Z}$ denotes the arithmetic torsion-free lattice $\Gamma \cap \G_Z(\Q)^+$ of $\G_Z(\R)^+$. Then, a first trivial remark is that for $\HLM_\typ$ to be non-empty, the Hodge datum $(\M, D_M)$ has to be of the following type:

\begin{definition}
A strict Hodge sub-datum $(\M,D_M) \subsetneq (\G,D)$ is said \textit{$\V$-admissible} if it satisfies the inequality of dimensions
\[ \dim \Phi(S^\an) + \dim D_M - \dim D \geq 0. \]
We say that $(\M, D_M)$ is \textit{strongly $\V$-admissible} if the above inequality is strict.
\end{definition}

Our general result (see Theorem \ref{main}) is somewhat technical, but simplifies in the case where the algebraic monodromy group $\mon$ of $\mathbb{V}$ is $\mathbb{Q}$-simple and equal to $\G^\der$, saying that (strong) $\V$-admissibility of a sub-datum is enough to get density of the associated (typical) Hodge locus:

\begin{theorem}\label{mainqsimple}
Let $\V$ be a polarizable $\Z$-VHS on a smooth, irreducible and quasi-projective variety $S$ over $\C$ with generic Hodge datum $(\G,D)$. Assume that its algebraic monodromy group is $\mon = \G^\der$ and is $\Q$-simple. Let $(\M, D_M)\subsetneq (\G, D)$ be a strict Hodge sub-datum.
\begin{itemize}
\item[(i)] If $(\M, D_M)$ is $\V$-admissible, then $\HLM$ is analytically dense in $S^\an$. 
\item[(ii)] If $(\M, D_M)$ is strongly $\V$-admissible, then $\HLM_\typ$ is analytically dense in $S^\an$.
\end{itemize}
\end{theorem}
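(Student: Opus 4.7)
The plan is to reformulate analytic density of $\HLM$ as an intersection density question inside the period domain $D$, settle it via a transversality argument powered by the $\V$-admissibility dimension inequality, and finally transfer the resulting open set of real parameters to rational ones using weak approximation.

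Let $\tilde{\Phi} : \suniv \to D$ denote a chosen lift of the period map to the universal cover. Since $\MT(\V_s) \subseteq g \M g^{-1}$ is equivalent to $\tilde{\Phi}(\tilde s) \in g \cdot D_M$ for an appropriate lift of $s$, analytic density of $\HLM$ in $S^\an$ is equivalent to the following: for every non-empty open $\tilde U \subset \suniv$, there exists $g \in \G(\Q)^+$ with $g \cdot D_M \cap \tilde{\Phi}(\tilde U) \neq \emptyset$. To produce such a $g$, I would consider the orbit map
\[
F : \G(\R)^+ \times D_M \longrightarrow D, \qquad (g, x) \longmapsto g \cdot x,
\]
which is a real-analytic submersion since $\G(\R)^+$ acts transitively on $D$. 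Its preimage $I := F^{-1}\bigl(\tilde{\Phi}(\tilde U)^{\sm}\bigr)$ is then a real-analytic submanifold of real dimension
\[
\dim_\R \G(\R)^+ + 2\bigl(\dim \Phi(S^\an) + \dim D_M - \dim D\bigr) \geq \dim_\R \G(\R)^+,
\]
where the inequality is exactly $\V$-admissibility. An inspection of the differential of $F$ shows that the first projection $p_1 : I \to \G(\R)^+$ is submersive at $(g,x)$ precisely when $g \cdot D_M$ is transverse to $\tilde{\Phi}(\tilde U)$ at $gx \in D$. Using the transitivity of the $\G(\R)^+$-action and the admissibility bound, one checks that varying $(g,x)$ inside the non-empty fiber of $F$ over a smooth $y_0 \in \tilde{\Phi}(\tilde U)$ sweeps through enough tangent planes $T_{y_0}(g \cdot D_M)$ to reach transversality with $T_{y_0} \tilde{\Phi}(\tilde U)$, yielding an open $W \subset \G(\R)^+$ along which $g \cdot D_M$ meets $\tilde{\Phi}(\tilde U)$.

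The hypothesis that $\mon = \G^\der$ is $\Q$-simple then enters through density of $\G(\Q)^+$ in $\G(\R)^+$, a consequence of weak approximation for connected semisimple $\Q$-groups. Any $g \in W \cap \G(\Q)^+$ solves the intersection problem, and projecting to $S^\an$ produces a point of $\HLM \cap U$; this proves (i). For (ii), strong $\V$-admissibility makes the transverse intersection $g \cdot D_M \cap \tilde{\Phi}(\tilde U)$ of strictly positive complex dimension equal to the expected one, so the resulting local component of $\HLM$ has the dimension of a typical component. To upgrade this to density of $\HLM_\typ$, I would further shrink $W$ and pick $g$ generically so that the generic Hodge datum of the component is exactly (a conjugate of) $\M$: components with strictly smaller generic Hodge datum form a countable union of atypical families of strictly lower dimension, hence a meagre subset of $W$.

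The main obstacle I anticipate is the transversality step, namely exhibiting $(g_0, x_0) \in I$ at which $g_0 \cdot D_M$ is genuinely transverse to $\tilde{\Phi}(\tilde U)$. Here the hypothesis $\mon = \G^\der$ $\Q$-simple is essential: without it the rational translates $g \cdot D_M$ could be trapped inside a proper $\Q$-Hodge subvariety of $D$ and fail to reach transverse intersection with $\tilde{\Phi}(\tilde U)$, which is the obstruction that the more general Theorem \ref{main} presumably handles with refined hypotheses. A secondary technical point is the genericity argument needed to isolate $\HLM_\typ$ in part (ii).
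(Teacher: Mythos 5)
There is a genuine gap at exactly the step you flag as ``the main obstacle,'' and it is not a technical point but the heart of the theorem. Your dimension count shows that $I = F^{-1}(\tilde{\Phi}(\tilde U)^{\sm})$ has real dimension at least $\dim_\R \G(\R)^+$, but this does \emph{not} imply that $p_1 : I \to \G(\R)^+$ is open or dominant near any point: the image of $p_1$ could be a proper lower-dimensional subset with fibers of excess dimension. Equivalently, every component of $g \cdot D_M \cap \tilde{\Phi}(\tilde U)$ could be an \emph{atypical} intersection for every $g$. Your claim that varying $g$ in the fiber of $F$ over $y_0$ ``sweeps through enough tangent planes $T_{y_0}(g\cdot D_M)$ to reach transversality'' is unsupported: the planes swept out form the orbit of one subspace under the (compact) isotropy group of $y_0$, a small subset of the Grassmannian, and the tangent space $T_{y_0}\tilde{\Phi}(\tilde U)$ is itself highly constrained (by Griffiths transversality among other things), so no general-position argument applies. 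The paper's proof replaces this step by the Bakker--Tsimerman Ax-Schanuel theorem: if the component $U$ through a Hodge generic point had excess dimension, Ax-Schanuel forces $U$ into the image of a strict weakly special subvariety $Y$; Andr\'e's theorem identifies $\mon_Y$ with a proper normal subgroup of $\mon$ (a product of simple factors indexed by some $I$), and the (factorwise) admissibility inequality, via the quantities $d_I(\M,D_M)$ of Lemma \ref{indepcap}, then yields a numerical contradiction. Nothing in your proposal substitutes for this functional-transcendence input, and without it the argument does not close. (Your diagnosis that $\Q$-simplicity prevents $g\cdot D_M$ from being ``trapped in a proper $\Q$-Hodge subvariety'' misidentifies the obstruction, which is differential-geometric, not arithmetic; $\Q$-simplicity enters only to collapse the factorwise conditions of Theorem \ref{main} to plain admissibility.)

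Part (ii) has the same problem one level up. Asserting that components with strictly smaller generic Hodge datum form ``a countable union of atypical families of strictly lower dimension, hence a meagre subset of $W$'' is again the conclusion, not an argument: the reason each such family contributes a locus of smaller dimension in $G$ is once more Ax-Schanuel combined with the \emph{strong} admissibility inequality (this is the paper's definable-families argument with the loci $\mathcal{P}_k$ and the strict inequality $d_H > d_I$). Two smaller points: the density of $\G(\Q)^+$ in $\G(\R)^+$ is used for the reductive group $\G$, not a semisimple one, so one should invoke unirationality of connected reductive groups rather than weak approximation for semisimple groups; and the paper actually proves Theorem \ref{mainqsimple} by reduction to Theorem \ref{main}, so a self-contained proof such as yours must reproduce the Ax-Schanuel mechanism of that proof rather than bypass it.
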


\begin{remark}
As pointed out in \cite[Rmk. 10.2]{bku}, Baldi-Klingler-Ullmo's geometric Zilber-Pink theorem gives (because we are in the $\Q$-simple monodromy case) that (i) implies (ii). We give an independent argument for (ii) that still works in the factor case.
\end{remark}

In terms of the full Hodge locus we thus have immediately:

\begin{corollary}\label{qsimplemon}
Let $\mathbb{V}$ be as in Theorem \ref{mainqsimple}. If $(\G,D)$ admits a (resp. strongly) $\V$-admissible strict Hodge sub-datum, the Hodge locus $\HL$ (resp. the typical Hodge locus $\HL_{\typ}$) of $S$ for $\V$ is analytically dense in $S^\an$. 
\end{corollary}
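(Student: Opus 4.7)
The plan is to deduce the corollary directly from Theorem \ref{mainqsimple} by exploiting the tautological inclusions $\HLM \subseteq \HL$ and $\HLM_\typ \subseteq \HL_\typ$, which hold for any strict Hodge sub-datum $(\M, D_M) \subsetneq (\G, D)$. Since analytic density is monotone under inclusion, it then suffices to produce a single sub-datum for which $\HLM$ (resp. $\HLM_\typ$) is already dense, and this is exactly what the hypothesis, combined with Theorem \ref{mainqsimple}, provides.

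First I would verify the inclusion $\HLM \subseteq \HL$. If $s \in \HLM$, then by definition there exists $g \in \G(\Q)^+$ with $\MT(\V_s) \subseteq g \M g^{-1}$; since $\M \subsetneq \G$, the conjugate $g \M g^{-1}$ is a proper $\Q$-subgroup of $\G$, forcing $\MT(\V_s)$ to be strictly smaller than the generic Mumford-Tate group $\G$. Hence $s \in \HL$. The inclusion $\HLM_\typ \subseteq \HL_\typ$ is then a definitional check: a typical irreducible component of $\HLM$ is a typical component of the full locus $\HL$, since typicality (Definition \ref{typdef}) is phrased intrinsically in terms of the component's own generic Hodge datum and the dimension of $\Phi(S^\an)$, independently of the enveloping sub-datum $\M$ used to single it out.

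With these inclusions at hand, the argument concludes in one line: picking a (strongly) $\V$-admissible strict Hodge sub-datum $(\M, D_M)$ supplied by the hypothesis, Theorem \ref{mainqsimple}(i) (resp. (ii)) gives the analytic density of $\HLM$ (resp. $\HLM_\typ$) in $S^\an$, which transfers to $\HL$ (resp. $\HL_\typ$) via the inclusion. I do not expect any serious obstacle here: all substantive geometry is packed into Theorem \ref{mainqsimple}, and the corollary amounts to unwrapping the definitions.
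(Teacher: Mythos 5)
Your proposal is correct and matches the paper's (implicit) argument: the corollary is stated there as an immediate consequence of Theorem \ref{mainqsimple} via precisely the inclusions $\HLM \subseteq \HL$ and $\HLM_\typ \subseteq \HL_\typ$ that you verify. No issues.
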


\begin{remark}\label{constfactorrem}
The condition that $\mon = \G^\der$ will be verified in most geometric applications (e.g. hypersurfaces variations, Hodge-generic subvarieties of $\ppabel_g$). It is however needed to exclude the eventuality that the datum $(\M,D_M)$ is $\V$-admissible but not big enough on the factor of $D$ on which $\Phi$ is constant to generically intersect the period image. The most general condition that one can hope for (see Theorem \ref{main}) is that, writing $\G^\der = \mon \cdot \LL$ as an almost direct product, $\M$ contains $\LL$. Otherwise, the statement would contradict the Zilber-Pink conjecture for an auxiliary $\Z$-VHS (see Example \ref{factorwisexample}).
\end{remark}

Working with not necessarily $\Q$-simple algebraic monodromy groups leads to complications. The following example illustrates the fact that in this general setting we cannot hope for Theorem \ref{mainqsimple} to remain true and that we need to impose similar numerical conditions on each factor.

\begin{example}\label{factorwisexample}
In this example, we fix a level structure $n \geqslant 3$ and for any $g \geqslant 1$ we denote by $\ppabel_g$ the quasi-projective Shimura variety of dimension $\frac{g(g+1)}{2}$ parametrising principally polarized abelian varieties of dimension $g$ with level-$n$-structure. Let $C \subset \ppabel_3$ be a Hodge generic curve. Let $\V$ be the $\Z$-VHS corresponding to the first cohomology groups of the members of the universal family of abelian six-folds above $S := \ppabel_3 \times C$ (recall that we fixed level structures so that we have fine moduli spaces). Its period map is the inclusion $\ppabel_3 \times C \hookrightarrow \ppabel_3 \times \ppabel_3$. Let $(\M,D_M)$ be a strict Hodge sub-datum with $\Gamma_\M \quo D_M = \ppabel_3 \times (\ppabel_1 \times \ppabel_2)$. It is strongly $\V$-admissible. If $\HLM_\typ$ was dense, one would find that $C \subset \mathcal{A}_{3}$ intersects Hecke translates of $\mathcal{A}_{1} \times \mathcal{A}_{2}$ in a dense subset. This contradicts the Zilber-Pink conjecture \ref{zilberpink} for the variation on $C$ corresponding to the first cohomology groups of the members of the universal family of abelian threefolds above $C$, which has period map $C \hookrightarrow \mathcal{A}_{3}$: $C$ is of dimension $1$ while $\ppabel_1 \times \ppabel_2$ has codimension $2$ in $\ppabel_3$, so the intersections of $C$ with Hecke translates of $\ppabel_1 \times \ppabel_2$ are atypical. So Theorem \ref{mainqsimple} should fail in the non-$\Q$-simple monodromy case.
\end{example}

Our criterion is nearly a complete answer to \ref{criterionquestion} in the sense that the only missing ingredient for our methods to give a full characterization of the density of typical Hodge loci is a better understanding of the distribution of typical special points in $S$. More precisely, using the methods of this work, we would need the following statement, which is a consequence of the Zilber-Pink conjecture \ref{zilberpink}, to get a full answer to \ref{criterionquestion}:

\begin{conjecture}\label{qsimpleconj}
Let $\V$ be as in Theorem \ref{mainqsimple}, and $(\M, D_M) \subsetneq (\G,D)$ be a strict Hodge sub-datum which satisfies
\[ \dim \Phi(S^\an) + \dim D_M = \dim D. \]
Then for any non-empty analytic open subset $B \subset S^\an$, one has a strict containment
\[ \HLM_\atyp \cap B \subsetneq \HLM \cap B . \]
\end{conjecture}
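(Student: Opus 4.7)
The plan is to derive Conjecture \ref{qsimpleconj} from the combination of Theorem \ref{mainqsimple}(i) and Conjecture \ref{zilberpink} applied to $\V$ itself. The boundary equality $\dim \Phi(S^\an) + \dim D_M = \dim D$ makes $(\M, D_M)$ $\V$-admissible in the non-strict sense, so Theorem \ref{mainqsimple}(i) applies and yields that $\HLM$ is analytically dense in $S^\an$; in particular $\HLM \cap B \neq \emptyset$ for every open $B \subset S^\an$, which already settles the case when $\HLM_\atyp \cap B = \emptyset$.

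Next I would invoke the Zilber-Pink Conjecture \ref{zilberpink}: it asserts that $\HL_\atyp$ is an algebraic subvariety of $S$. Since the generic Hodge datum of $\V$ is $(\G,D)$ we have $\HL \subsetneq S$, so $\HL_\atyp$ is in fact a \emph{proper} algebraic subvariety of the irreducible variety $S$, and hence analytically nowhere dense. The key auxiliary observation is the inclusion $\HLM_\atyp \subseteq \HL_\atyp$: each component $Y$ of $\HLM_\atyp$ is by definition atypical with respect to its own generic Hodge datum $(\M_Y, D_{M_Y})$, and the atypicality condition $\dim Y > \dim \Phi(S^\an) + \dim D_{M_Y} - \dim D$ is intrinsic to $Y$ as a special subvariety, not a property relative to $\M$, so $Y$ is equally an atypical component of $\HL$.

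Combining these two inputs, for any open $B \subset S^\an$ the set $B \setminus \HL_\atyp$ is open and nonempty by nowhere-density of $\HL_\atyp$; by analytic density of $\HLM$ one finds a point of $\HLM$ inside $B \setminus \HL_\atyp \subseteq B \setminus \HLM_\atyp$, yielding the desired strict containment $\HLM_\atyp \cap B \subsetneq \HLM \cap B$. The only genuine obstacle in this plan is the Zilber-Pink Conjecture itself, which is well beyond current reach; the density input from Theorem \ref{mainqsimple}(i) and the inclusion $\HLM_\atyp \subseteq \HL_\atyp$ are both unconditional in the $\Q$-simple monodromy setting, in line with the authors' remark that Conjecture \ref{qsimpleconj} should follow from Conjecture \ref{zilberpink}.
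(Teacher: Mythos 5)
The statement you are asked about is stated in the paper as a \emph{conjecture} (Conjecture \ref{qsimpleconj}); the paper gives no proof of it, and explicitly identifies it as the missing ingredient preventing a full answer to Question \ref{criterionquestion}. So there is no internal argument to compare yours against, and the relevant question is whether your proposal actually proves the statement. It does not: your argument is a reduction to Conjecture \ref{zilberpink} (Zilber--Pink), which the paper describes as entirely out of reach, so the essential content of the statement remains unproved. You acknowledge this, but it should be said plainly that what you have written is a proof of the implication ``Conjecture \ref{zilberpink} $\Rightarrow$ Conjecture \ref{qsimpleconj}'', not a proof of Conjecture \ref{qsimpleconj}.

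As a conditional reduction, your argument is correct and is evidently the one the authors have in mind when they call the statement ``a consequence of the Zilber--Pink conjecture''. The three inputs all check out: the boundary equality gives (non-strict) $\V$-admissibility, so Theorem \ref{mainqsimple}(i) — which is unconditional — yields analytic density of $\HLM$; the inclusion $\HLM_\atyp \subseteq \HL_\atyp$ is immediate because Definition \ref{typdef} is intrinsic to the special subvariety and its own generic Hodge datum, so an atypical component of $\HLM$ is in particular a strict atypical special subvariety of $S$; and under Conjecture \ref{zilberpink} the locus $\HL_\atyp$ is a closed algebraic subvariety which is proper (its irreducible components lie in $\HL \subsetneq S$), hence analytically nowhere dense in the irreducible variety $S$, so every nonempty open $B$ meets $\HLM$ at a point outside $\HL_\atyp \supseteq \HLM_\atyp$. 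The one thing your write-up should make explicit is that an unconditional proof would require replacing the appeal to Zilber--Pink by some direct control of the distribution of atypical components of $\HLM$ in the critical case $d_H(\M,D_M)=0$, where all typical components of $\HLM$ are expected to be points; that is precisely the difficulty the authors flag, and your proposal does not address it.
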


Indeed, the reason for distinguishing between admissibility and strong admissibility in Theorem \ref{mainqsimple}(ii) ultimately arises from the inability to handle typicality questions in the situation where one expects $\HLM$ to consist of typical special points. 

\begin{remark}
Combining recent work of Tayou-Tholozan and the density of $\HL_\typ$ in $S$ that we prove here, one should be able to obtain that this locus is even equidistributed in $S$ for a natural measure. We refer the interested reader to their paper \cite{TT}.
\end{remark}

\subsubsection{Classical Noether-Lefschetz loci} 

We now explain how to apply our results to reprove classical theorems on the density of Noether-Lefschetz loci. In particular, we will show how our results generalize the classical results of Ciliberto-Harris-Miranda \cite{zbMATH04097545} and Green \cite[Prop. 17.20]{voi} by proving the analytic density of the components of maximal codimension of the Noether-Lefschetz locus of degree $d$ hypersurfaces in $\mathbb{P}^{3}$ as soon as $d \geq 5$. (The case $d = 4$ requires extra analysis, which we omit.)

In this case one considers the variation $\mathbb{V} = (R^{2} f_{*} \underline{\mathbb{Z}})_{\textrm{prim}}$ on primitive cohomology associated to the universal family $f$ of such hypersurfaces, and the polarization $Q : \mathbb{V} \otimes \mathbb{V} \to \underline{\mathbb{Z}}$ induced by cup-product. Fix some $s \in S^\an$. The result of Beauville \cite{zbMATH03973031} guarantees that the algebraic monodromy group $\mon$ of $\mathbb{V}$ is the full orthogonal group $\textrm{Aut}(\mathbb{V}_{s}, Q_{s})$ stabilizing the polarizing form. As $Q_{s}$ is a Hodge class in $(\mathbb{V}_{s} \otimes \mathbb{V}_{s})^{\vee}$ and the algebraic monodromy group lies inside the generic Mumford-Tate group, this means that the generic Hodge datum for $\mathbb{V}$ takes the form $(\G, D)$, with $\G = \textrm{GAut}(\mathbb{V}_{s}, Q_{s})$ and with $D$ the space of all polarized Hodge structures with the same Hodge numbers $(h^{2,0}, h^{1,1}, h^{0,2})$ as $\mathbb{V}$. Given any subdatum $(\M, D_{M}) \subset (\G, D)$ corresponding to a single fixed Hodge vector, one computes that $\dim D - \dim D_{M} = h^{2,0}$. On the other hand it is classical that $h^{2,0} = {d-1 \choose 3}$, hence one has for $d \geq 5$ that 
\[ \dim D - \dim D_{M} = {d-1 \choose 3} < {d+3 \choose 3} - 16 = \dim \Phi(S^\an) . \]
Here we have computed the period dimension by observing that the projective space of degree $d$ homogeneous polynomials in $4$ variables is ${d+3 \choose 3} - 1$ dimensional, and the period map becomes generically injective modulo the natural action of $\textrm{SL}_{4}$ on the moduli space as a consequence of Donagi's generic Torelli theorem \cite{zbMATH03964006}, which holds assuming $d \geq 5$. Thus the datum $(\M, D_M)$ is strongly $\mathbb{V}$-admissible, and Theorem \ref{mainqsimple} lets us conclude.

We expect that the above argument, which requires essentially only a verification of Zariski dense monodromy and a generic Torelli theorem, can be repeated to reprove numerous other analytic density results in the literature.

\subsubsection{Special subvarieties of the Torelli locus}

Let $g \geqslant 4$ be an integer, and fix a level structure $n \geqslant 3$. Let $\ppabel_g$ be the Shimura variety parametrising principally polarized abelian varieties of dimension $g$ (with the given level structure) whose associated Shimura datum is $(\gsp_{2g}, \halfplane_g)$, where $\halfplane_g$ is the Siegel upper half-plane. Recall that $\ppabel_g$ is a quasi-projective complex variety of dimension $\frac{g(g+1)}{2}$ and that the largest dimension obtained by any of its strict special subvarieties is $\frac{g(g-1)}{2} + 1$, realised for example by $\ppabel_{g-1} \times \ppabel_1$. As suggested in \cite[Rmk 3.15]{bku}, one expects in this setting that:

\begin{conjecture}
Let $S \subset \ppabel_g$ be a closed Hodge generic subvariety of dimension $q$, and $\V$ the induced polarizable $\Z$-VHS on $S$. The typical Hodge locus of $S$ for $\V$ is analytically dense if and only if $q \geqslant g-1$. In that case, the Hecke translates of $\ppabel_{g-1} \times \ppabel_{1}$ in $\ppabel_g$ intersect $S$ in an analytically dense subset.
\end{conjecture}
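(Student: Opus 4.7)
The plan is to apply Theorem \ref{mainqsimple} to the inclusion period map of $S \hookrightarrow \ppabel_g$ and to the sub-datum $(\M, D_M)$ corresponding to $\ppabel_{g-1} \times \ppabel_1$. First, the hypotheses of Theorem \ref{mainqsimple} follow from standard facts: Hodge genericity of $S$ gives generic Hodge datum $(\gsp_{2g}, \halfplane_g)$, so $\G^\der = \ssp_{2g}$ is $\Q$-simple, and by the theorem of Andr\'e-Deligne the algebraic monodromy $\mon$ is a normal $\Q$-subgroup of $\G^\der$. For $q \geq 1$ the period image is positive-dimensional, so $\mon$ is non-trivial, and $\Q$-simplicity forces $\mon = \G^\der$. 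The period map $\Phi$ is essentially the inclusion of $S$, so $\dim \Phi(S^\an) = q$.

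Second, for $(\M, D_M)$ with $\Gamma_\M \quo D_M = \ppabel_{g-1} \times \ppabel_1$, using $\dim \halfplane_g = g(g+1)/2$ and $\dim(\halfplane_{g-1} \times \halfplane_1) = (g-1)g/2 + 1$ one finds
\[
\dim D - \dim D_M = g - 1,
\]
so $(\M, D_M)$ is $\V$-admissible iff $q \geq g-1$ and strongly $\V$-admissible iff $q \geq g$. When $q \geq g$, Theorem \ref{mainqsimple}(ii) yields directly that $\HLM_\typ$ is analytically dense in $S^\an$, and its components are the expected intersections of $S$ with Hecke translates of $\ppabel_{g-1} \times \ppabel_1$, giving both assertions of the conjecture in this range. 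At the sharp threshold $q = g - 1$, Theorem \ref{mainqsimple}(i) still yields analytic density of $\HLM$ in $S^\an$, but the corresponding intersections now have expected dimension zero; promoting these a priori possibly atypical components to typical ones is exactly the Zilber-Pink type input missing in Conjecture \ref{qsimpleconj}.

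For the converse direction, any typical special subvariety arising from a strict sub-datum $(\M', D_{M'})$ has dimension $\dim \Phi(S^\an) + \dim D_{M'} - \dim D \geq 0$, so non-emptiness of $\HL_\typ$ requires a proper sub-datum with $\dim D_{M'} \geq \dim D - q$. Since the maximum dimension of a proper special subvariety of $\ppabel_g$ is $g(g-1)/2 + 1$, realised by $\ppabel_{g-1} \times \ppabel_1$, this forces $q \geq g - 1$.

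The main obstacle will be exactly the typicality upgrade at $q = g-1$: the expected intersection dimension drops to zero and the numerical criterion supplied by Theorem \ref{mainqsimple} can no longer distinguish typical from atypical components, so a full proof of the conjecture requires the input of Conjecture \ref{qsimpleconj}, equivalently a Zilber-Pink style control on how many of these zero-dimensional components lie in $\HLM_\atyp$.
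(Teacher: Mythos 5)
This statement is a conjecture in the paper (following \cite[Rmk 3.15]{bku}); the paper does not prove it, and only establishes the $q \geqslant g$ portion as Corollary \ref{Ag}, by exactly the argument you give ($\mon = \ssp_{2g}$ via Andr\'e plus $\Q$-simplicity, and strong $\V$-admissibility of the $\ppabel_{g-1}\times\ppabel_1$ sub-datum being equivalent to $q \geqslant g$). Your diagnosis of the remaining gap is the correct one: at $q = g-1$ the sub-datum is only $\V$-admissible, Theorem \ref{mainqsimple}(i) yields density of $\HLM$ but not of its typical part, and upgrading the resulting zero-dimensional components to typical ones is precisely the content of Conjecture \ref{qsimpleconj}, which the authors themselves identify as the missing Zilber--Pink-type input. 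Your converse direction, using that $\frac{g(g-1)}{2}+1$ is the maximal dimension of a strict special subvariety of $\ppabel_g$ so that non-emptiness of the typical locus forces $q \geqslant g-1$, is the ``trivial remark'' on admissibility from the introduction and is also correct.
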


Unconditionally, Theorem \ref{mainqsimple} gives the following:

\begin{corollary}\label{Ag}
Let $S \subset \ppabel_g$ be a closed Hodge generic subvariety of dimension $q$, and $\V$ the induced polarizable $\Z$-VHS on $S$. If $q \geqslant g$, then the typical Hodge locus of $S$ for $\V$ is analytically dense. In that case, the Hecke translates of $\ppabel_{g-1} \times \ppabel_{1}$ in $\ppabel_g$ intersect $S$ in an analytically (positive dimensional) dense subset.
\end{corollary}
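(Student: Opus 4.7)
The plan is to apply Theorem \ref{mainqsimple}(ii) to a well-chosen strict Hodge sub-datum of the ambient datum $(\gsp_{2g}, \halfplane_g)$. First I would verify the hypotheses of that theorem. Since $S$ is Hodge generic in $\ppabel_g$, the generic Mumford-Tate group of $\V$ is $\G = \gsp_{2g}$ with derived group $\G^\der = \ssp_{2g}$, which is $\Q$-simple. By Deligne-André, the identity component of the algebraic monodromy group $\mon$ is a normal $\Q$-subgroup of $\G^\der$; it cannot be trivial, because the period map of $\V$ is the inclusion $S \hookrightarrow \ppabel_g$ whose image has positive dimension $q \geq g \geq 4$. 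By $\Q$-simplicity of $\ssp_{2g}$, we conclude $\mon = \G^\der$.

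Next, I would take $(\M, D_M)$ to be the Hodge sub-datum corresponding to the sub-Shimura variety $\ppabel_{g-1} \times \ppabel_1 \subset \ppabel_g$: concretely, $\M \subset \gsp_{2g}$ is the stabilizer of a symplectic orthogonal decomposition of the standard representation into pieces of dimension $2(g-1)$ and $2$, and $D_M = \halfplane_{g-1} \times \halfplane_1$. Using $\dim D_M = \tfrac{g(g-1)}{2} + 1$, $\dim D = \tfrac{g(g+1)}{2}$, and $\dim \Phi(S^\an) = q$, the strong $\V$-admissibility condition becomes
\[ q + \tfrac{g(g-1)}{2} + 1 - \tfrac{g(g+1)}{2} = q - g + 1 > 0, \]
which is precisely the hypothesis $q \geq g$.

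Applying Theorem \ref{mainqsimple}(ii) then produces the analytic density of $\HLM_\typ$ in $S^\an$. Unwinding the definition, a point $s \in S$ lies in $\HLM$ exactly when $\MT(\V_s)$ is contained in some $\G(\Q)^+$-conjugate of $\M$, which under the identification $S \subset \ppabel_g$ is the condition that $s$ lies in a Hecke translate of $\ppabel_{g-1} \times \ppabel_1$; hence such Hecke translates meet $S$ in an analytically dense subset. Moreover, typical components have expected dimension $\dim \Phi(S^\an) + \dim D_M - \dim D = q - g + 1 \geq 1$, so these intersections are automatically positive-dimensional. The only nontrivial step is checking that $\mon = \G^\der$ rather than a strict normal subgroup, but this is forced by $\Q$-simplicity together with the fact that $S$ has positive dimension; everything else is a direct numerical verification followed by an appeal to the main theorem.
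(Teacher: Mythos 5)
Your proposal is correct and follows essentially the same route as the paper: verify $\mon = \G^\der = \ssp_{2g}$ via Andr\'e's theorem and $\Q$-simplicity, observe that $q \geqslant g$ is exactly strong $\V$-admissibility of the sub-datum corresponding to $\ppabel_{g-1}\times\ppabel_1$, and apply Theorem \ref{mainqsimple}(ii). Your added remark that the expected dimension $q-g+1 \geqslant 1$ forces the intersections to be positive dimensional is a detail the paper leaves implicit, but it is the right justification.
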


\begin{proof}
$\V$ has generic Mumford-Tate group $\G = \gsp_{2g}$ whose derived subgroup $\G^\der = \ssp_{2g}$ is $\Q$-simple. In particular, since $\V$ is non-constant and the algebraic monodromy group $\mon$ is a $\mathbb{Q}$-normal subgroup of the derived Mumford-Tate group, one has $\mon = \G^\der = \ssp_{2g}$. To conclude, one simply remarks that the inequality $q \geqslant g$ is equivalent to the strong $\V$-admissiblity of the Hodge sub-datum corresponding to $\mathcal{A}_{g-1} \times \mathcal{A}_{1}$.
\end{proof}

Let $\cur_g$ be the moduli space of smooth projective curves of genus $g$ with level-$n$-structure, let $j : \cur_g \hookrightarrow \ppabel_g$ the Torelli morphism, 
and $\torelli^0_g = j(\cur_g) \subset \ppabel_g$ the open Torelli locus. In this setting, Corollary \ref{Ag} gives:

\begin{corollary}\label{torelli}
The typical Hodge locus of $\cur_g$ for the polarizable $\Z$-VHS induced by $j$ is analytically dense in $\cur_g$.
\end{corollary}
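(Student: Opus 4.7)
The strategy is to apply Corollary \ref{Ag} to the closed Torelli locus $S := \overline{\torelli^0_g} \subset \ppabel_g$ and then transfer the resulting density statement back to $\cur_g$ via the Torelli morphism $j$.

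First, I would verify that $S$ satisfies the hypotheses of Corollary \ref{Ag}. Since $j$ is injective by the Torelli theorem, one has $\dim S = \dim \cur_g = 3g - 3$, and for $g \geqslant 4$ this is strictly greater than $g$. Hodge-genericity of $S$ in $\ppabel_g$ is classical: the Jacobian of a very general smooth projective curve of genus $g$ has Mumford-Tate group equal to the full $\gsp_{2g}$, so the generic Mumford-Tate group of the restricted VHS is $\gsp_{2g}$. With these two inputs, Corollary \ref{Ag} applies and gives that the typical Hodge locus of $S$ for the restricted VHS is analytically dense in $S^\an$, witnessed by intersections with Hecke translates of $\ppabel_{g-1} \times \ppabel_1$.

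Next, I would transfer this density statement back to $\cur_g$. Since $\torelli^0_g$ is Zariski (and hence analytically) open in $S$, density in $S^\an$ restricts to density in $(\torelli^0_g)^\an$. The map $j : \cur_g \to \torelli^0_g$ is a bijective morphism of irreducible algebraic varieties of the same dimension $3g-3$, hence restricts to a biholomorphism on a Zariski dense analytic open subset (for instance, away from the hyperelliptic locus when $g \geqslant 3$). Because the VHS on $\cur_g$ is by definition the pullback along $j$ of the universal VHS on $\ppabel_g$, the Mumford-Tate group of a fiber $\V_c$ coincides with that of $j(c)$, so the (typical) Hodge locus of $\cur_g$ equals $j^{-1}$ of the (typical) Hodge locus of $\torelli^0_g$. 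Density transfers through this bijective correspondence.

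The argument is a direct application of Corollary \ref{Ag} combined with classical facts about the Torelli morphism, so there is no substantive obstacle. The only mild subtlety is the transfer of density through $j$, but this is routine given that $j$ is a bijection of irreducible varieties of the same dimension and restricts to an analytic isomorphism on a dense open subset.
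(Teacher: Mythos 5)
Your proposal is correct and follows essentially the same route as the paper: apply Corollary \ref{Ag} to the Zariski closure of $\torelli^0_g$, using $\dim \torelli^0_g = 3g-3 \geqslant g$ for $g \geqslant 4$, and pull the density back through $j$. The extra details you supply (Hodge-genericity of the Torelli locus and the transfer of analytic density along the bijective, finite morphism $j$) are points the paper leaves implicit, and your treatment of them is sound.
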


\begin{proof}
$\torelli^0_g$ is known to be of dimension $3g - 3$ which is bigger than $g$ in our setting ($g \geqslant 4$). Therefore, applying Corollary \ref{Ag} to the closure of $\torelli^0_g$ in $\ppabel_g$ gives the density of the Hodge locus in $\cur_g$ for the analytic topology.
\end{proof}

\begin{remark}
Similar questions had already been studied in \cite{izadi, colombopirola} for $\ppabel_g$ and \cite{chai} in the more general setting of Shimura varieties. Chai had developed a numerology to give a sufficient condition on the codimension of a subvariety of a Shimura variety to have analytically dense Hodge locus of given type $\M$. 
These conditions weren't however optimal, and Theorem \ref{mainqsimple} (and its generalisation Theorem \ref{main}) can be seen as a refinement of these works in three directions. First, we give numerical conditions in a general Hodge theoretic setting while previous authors only worked in the Shimura setting. Furthermore, our numerical conditions are sharper than previous ones, even in the Shimura setting. For example, they only knew that subvarieties of $\ppabel_g$ had analytically dense Hodge locus if they had codimension less than $g$. We prove the same thing for subvarieties of dimension greater than $g$. Finally, the notion of typicality doesn't appear in the aforementionned works, so proving density of the typical part of the Hodge locus is an additional improvement.
\end{remark}

\subsection{Recent related work}

Recently, S. Eterovi{\'c} and T. Scanlon have obtained similar results in a general framework of so-called definable arithmetic quotients. In particular, they also give \cite[Theorem 3.4]{es} a criterion for Hodge loci to be dense. Our work can be seen as refining these results in the Hodge-theoretic setting, taking into account the Hodge-theoretic (a)typicality of the resulting loci. More precisely, we prove in Theorem \ref{mainqsimple}(ii) and Theorem \ref{main}(ii) that the components of $\HLM$ that we construct have the expected Mumford-Tate groups, whereas a conclusion of this type does not appear in the main theorem of \cite{es}.

\subsection{Aknowledgements}

We thank Emmanuel Ullmo and Gregorio Baldi for helpful feedback on a draft of this manuscript and for many useful conversations during the completion of this work. The second author would also like to thank Thomas Scanlon for explaining to him some aspects of his manuscript \cite{es}. Finally we thank the anonymous referee whose comments helped improve the presentation and corrected some inaccuracies.

\section{Recollections}

We now review notions from variational Hodge theory; we refer to \cite{voi}, \cite{ggk}, \cite{klingler_atyp} and \cite{klingicm} for a detailed account of the theory. The first two subsections recall the basic notions, and the subsequent two subsections will review more recent results which we will need for our proofs.
Before starting, let us emphasize on the following:

\begin{remark}\label{zariskiopen}
In this work, we are only interested on proving analytic density results for Hodge loci in the base $S$ of some $\Z$-VHS. In particular, to prove them, we are free to replace the base $S$ by:
\begin{itemize}
\item[(i)] some non-empty Zariski open subset, as $S$ being irreducible, such a subset will be analytically dense in $S$;
\item[(ii)] some finite étale covering, as the image of an analytically dense subset of the cover will remain such in $S$.
\end{itemize}
We will do such substitutions freely in the sequel.
\end{remark}

\begin{notation}
For an algebraic group $\G$ over $\Q$, we denote by $\G^\ad$ its adjoint group, by $\G^\der$ its derived subgroup, and by $\G(\R)^+$ the identity component for the real analytic topology. Finally, we define $\G(\Q)^+$ to be $\G(\R)^+ \cap \G(\Q)$.
\end{notation}

\subsection{Hodge data and special subvarieties}

Recall that the Mumford-Tate group of a $\Q$-Hodge structure $x : \s \rightarrow \gl(V_\R)$ on a finite dimensional $\Q$-vector space $V$ (where $\s = \res_{\C/\R} \mult_m$ is the Deligne torus) is the smallest $\Q$-subgroup $\MT(x)$ of $\gl(V)$ such that the morphism of $\R$-algebraic groups $x$ factors through $\MT(x)_\R$. Equivalently, it is the fixator in $\gl(V)$ of all Hodge tensors for $x$. It is a connected $\Q$-algebraic group, which is reductive as soon as $x$ is polarized, which will always be assumed in the sequel. The associated Mumford-Tate domain is the orbit of $x$ under the identity connected component $\MT(x)(\R)^+$ of the real Lie group $\MT(x)(\R)$ in $\Hom(\s, \MT(x)_\R)$. Both notions can be summarized in the following, now usual, definition:

\begin{definition}[{\cite[Def. 3.4, Def. 3.5]{klingler_atyp}}] \label{hodgedata}
~
\begin{itemize}
\item[(i)] A \textit{Hodge datum} is a pair $(\G,D)$ where $\G$ is the Mumford-Tate group of some Hodge structure, and $D$ is the associated Mumford-Tate domain. 
\item[(ii)] A \textit{morphism of Hodge data} $(\G,D) \rightarrow (\G',D')$ is a morphism of $\Q$-groups $\G \rightarrow \G'$ sending $D$ inside $D'$. 
\item[(iii)] A \textit{Hodge sub-datum} of $(\G, D)$ is a Hodge datum $(\G', D')$ such that $\G'$ is a $\Q$-subgroup of $\G$ and the inclusion $\G' \hookrightarrow \G$ induces a morphism of Hodge data $(\G',D') \rightarrow (\G,D)$.
\item[(iv)] A \textit{Hodge variety} is a quotient variety of the form $\Gamma \quo D$ for some Hodge datum $(\G,D)$ and some torsion-free arithmetic lattice $\Gamma \subset \G(\Q)^+$.
\end{itemize}
\end{definition}

\begin{remark}
Note that what we call here a \textit{Hodge datum} is usually referred to as a \textit{connected Hodge datum} in the literature. Since we will always be considering connected Hodge data, we ommit the "connected" in the sequel.
\end{remark}

Let $\V$ be a polarizable $\Z$-VHS on a smooth, irreducible and quasi-projective algebraic variety $S$ over $\C$. There exists a reductive $\Q$-algebraic group $\G$ and a countable union $\HL$ of irreducible algebraic subvarieties of $S$ such that for each point $s \in S^\an$, the polarized Hodge structure carried by $\V_s$ has Mumford-Tate group isomorphic (by parallel transport) to a subgroup of $\G$ which is strictly contained in $\G$ if and only if $s \in \HL$ (\cite{cdk}, \cite{bkt}). The group $\G$ is called the \textit{generic Mumford-Tate group of $\V$}, the locus $\HL$ is called the \textit{Hodge locus of $S$ for $\V$}, and points $s \in S^\an - \HL$ are said to be \textit{Hodge generic}. The orbit in $\Hom(\s, \G_\R)$ of the Hodge structure carried by $\V_s$ under $\G(\R)^+$ doesn't depend on the choice of $s \in S^\an$ and is denoted by $D$. We will refer to the Hodge datum $(\G,D)$ as the \textit{generic Hodge datum of $\V$}. Then, up to replacing $S^\an$ by a finite étale cover, one can associate to $\V$ a holomorphic, locally liftable to $D$ and horizontal map
\[
\Phi : S^\an \rightarrow \Gamma \quo D
\]
where $\Gamma \subset \G(\Q)^+$ is a torsion-free arithmetic lattice containing the image of the monodromy representation associated to $\V$. We will always refer to this map as \textit{the period map associated to $\V$} in the sequel.

If $Y$ is an irreducible algebraic subvariety of $S$, we can apply the previous paragraph to $\V \vert_{Y^{sm}}$ (where $Y^{sm}$ denotes the smooth locus of $Y$) to define the \textit{generic Hodge datum $(\G_Y, D_{G_Y})$ of $Y$ for $\V$}. We see it as a Hodge sub-datum of the generic Hodge datum $(\G,D)$ of $\V$.

\begin{definition}[{\cite[Def. 1.2]{ko}}]\label{caracspec}
An irreducible algebraic subvariety $Y \subset S$ is called a \textit{special subvariety of $S$ for $\V$} if it is maximal for the inclusion among irreducible algebraic subvarieties of $S$ whose generic Mumford-Tate group is $\G_Y$.
\end{definition}

Special subvarieties of $S$ for $\V$ can be characterized in more geometric terms.
A \textit{special subvariety of the Hodge variety $\Gamma \quo D$} is defined (see \cite[Def. 3.12]{klingler_atyp}) as the image of a map
\[
[f] : \Gamma' \quo D' \rightarrow \Gamma \quo D 
\]
between Hodge varieties induced by a morphism of Hodge data $f : (\G',D') \rightarrow (\G,D)$, where $\Gamma'\subset \G'(\Q)^+$ is a torsion-free arithmetic lattice such that $f(\Gamma') \subset \Gamma$ (such a map will be called, following \cite[Lem. 3.9.]{klingler_atyp}, a \textit{morphism of Hodge varieties}). Then we have the:

\begin{proposition}[{\cite[Prop 3.20]{klingler_atyp}}]
An irreducible algebraic subvariety $Y \subset S$ is special for $\V$ if and only if $Y$ is an irreducible component of the preimage by $\Phi$ of a special subvariety of the Hodge variety $\Gamma \quo D$.
\end{proposition}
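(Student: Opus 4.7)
The plan is to combine the characterization of special subvarieties from Proposition \ref{caracspec} (special $=$ maximal among irreducible algebraic subvarieties with prescribed generic Mumford-Tate group) with the Cattani--Deligne--Kaplan algebraicity theorem \cite{cdk,bkt}, which ensures that preimages under $\Phi$ of special subvarieties of $\Gamma \quo D$ are algebraic. The proposition then reduces to a matching of these two characterizations, one intrinsic to $S$ and one in terms of the period map.

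For the ``only if'' direction, starting from $Y \subset S$ special with generic Hodge datum $(\G_Y, D_{G_Y})$, I would consider the associated special subvariety $\Sigma := \Gamma_{\G_Y} \quo D_{G_Y}$ of $\Gamma \quo D$, where $\Gamma_{\G_Y} = \Gamma \cap \G_Y(\Q)^+$. Since every $s \in Y^\an$ has Mumford-Tate group contained in a conjugate of $\G_Y$, the image $\Phi(Y^\an)$ lies in $\Sigma$, so $Y$ is contained in an irreducible component $Y'$ of the (algebraic, by Cattani--Deligne--Kaplan) subvariety $\Phi^{-1}(\Sigma)$. The generic Mumford-Tate group of $Y'$ is contained in $\G_Y$ by construction of $\Sigma$ and contains it because $Y$ is Hodge-generic in $Y'$ for $\V|_{Y'}$; maximality of $Y$ (Proposition \ref{caracspec}) then forces $Y = Y'$.

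For the converse, let $Y$ be an irreducible component of $\Phi^{-1}(\Sigma)$ for some special subvariety $\Sigma = \Gamma' \quo D'$ of $\Gamma \quo D$; $Y$ is algebraic by the same theorem. To verify the criterion of Proposition \ref{caracspec}, I would take any irreducible algebraic $Y \subseteq Y'' \subseteq S$ sharing the generic Mumford-Tate group of $Y$, and argue that $\Phi(Y''^\an) \subseteq \Sigma$, so that $Y'' \subseteq \Phi^{-1}(\Sigma)$ and the irreducibility of components forces $Y'' = Y$.

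The main obstacle is this last inclusion $\Phi(Y''^\an) \subseteq \Sigma$: a priori, one only knows that the Mumford-Tate group of each point of $Y''$ lies in some $\G(\Q)^+$-conjugate of $\G'$, so that $\Phi(Y''^\an)$ lies in a countable union of Hecke translates of $\Sigma$ in $\Gamma \quo D$. Upgrading this to containment in the specific translate $\Sigma$ uses the connectedness of $Y''$, the local finiteness of the family of Hecke translates, and the already-known inclusion $\Phi(Y^\an) \subseteq \Sigma$ for the subvariety $Y \subseteq Y''$ with which we began. This connectedness-through-Hecke-translates step is the technical heart of the argument.
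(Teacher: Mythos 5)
The paper does not actually prove this statement: it is quoted verbatim from \cite[Lem. 4.7]{bku}, so there is no internal proof to compare against. Judged on its own, your reconstruction follows the standard argument and its overall architecture is sound: both directions correctly reduce to the maximality criterion of Proposition \ref{caracspec} together with the algebraicity of $\Phi^{-1}(\Sigma)$, and the verification that the component $Y'$ (resp. the enlargement $Y''$) has the same generic Mumford--Tate group as $Y$ is handled correctly.

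The one step I would push back on is the justification you give for the inclusion $\Phi(Y''^{\an}) \subseteq \Sigma$, which you correctly identify as the crux. The family of Hecke translates of $\Sigma$ in $\Gamma \quo D$ is \emph{not} locally finite in general (translates of a point of $D_M$ can be analytically dense, which is the very phenomenon the rest of the paper exploits), so "local finiteness" cannot be the mechanism. What does work is either (a) the Baire-type argument: the locus in $Y''^{\an}$ mapping into a fixed translate $\pi(g \cdot D_{G_Y})$ is closed analytic, there are only countably many relevant translates, and an irreducible analytic space is not a countable union of proper closed analytic subsets; or, more efficiently, (b) the paper's own setup in Section 2.1: the generic Hodge datum $(\G_{Y''}, D_{G_{Y''}})$ of $Y''$ is already defined as a sub-datum of $(\G,D)$, and the restricted period map factors through the single special subvariety $\Gamma_{\G_{Y''}} \quo D_{G_{Y''}}$. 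Since $\G_{Y''} = \G_Y$ and $Y \subseteq Y''$, this special subvariety coincides with $\Gamma_{\G_Y} \quo D_{G_Y}$, which is contained in $\Sigma$ because a Hodge-generic point of $Y$ lifts into the corresponding translate of $D'$ and $\G_Y$ is then contained in the corresponding conjugate of $\G'$. With that substitution the argument is complete and is, as far as one can tell, the same as the one in \cite{bku}.
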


This characterisation, which realises special subvarieties as preimages by the period map $\Phi$ of intersections between $\Phi(S^\an)$ and some analytic subvarieties of $\Gamma \quo D$, naturally suggests the dichotomy presented in the introduction between typical and atypical special subvarieties. More precisely, one defines:

\begin{definition} \label{typdef}
Let $Y$ be a special subvariety of $S$ for $\V$ with generic Hodge datum $(\G_Y, D_{\G_Y})$. It is said to be \textit{atypical} if $\Phi(S^\an)$ and $\Gamma_{\G_Y} \quo D_{G_Y}$ intersect with bigger dimension than expected along $\Phi(Y^\an)$ inside of $\Gamma \quo D$, i.e.
\[
\codim_{\Gamma \quo D} \Phi(Y^\an) < \codim_{\Gamma \quo D}\Phi(S^\an) + \codim_{\Gamma \quo D}\Gamma_{\G_Y} \quo D_{G_Y}
\]
Otherwise, it is said to be \textit{typical}. 
\end{definition}

\begin{remark}
Note the difference with \cite[Def. 2.2]{bku}: here we allow typical special subvarieties to be singular for $\V$ (i.e. to have image by $\Phi$ fully contained in the singular locus of $\Phi(S^\an)$, see \textit{op. cit.}). Indeed, because our results are insensitive to replacing the base by some non-empty Zariski open subset of it, we are free to remove the singular locus of $\Phi(S^\an)$ (see \cite[Rmk. 2.3.]{bku} or Step $1$ of Section \ref{pfmain}). Thus, we choose to use this simpler definition. Note that this definition also differs from Klingler's original one (\cite[Def. 4.3]{klingler_atyp}), which allows typical subvarieties to lie in the singular locus of $\V$ but uses codimensions inside of the horizontal distribution of $\Gamma \quo D$.
\end{remark}

Then, the Hodge locus splits in two parts 
\[
\HL = \HL_\typ \cup \HL_\atyp
\]
where $\HL_\typ$ (resp. $\HL_\atyp$) is defined as the union of the strict typical (resp. atypical) special subvarieties of $S$ for $\V$, and is referred to as the \textit{typical (resp. atypical) Hodge locus}.

\subsection{Monodromy and weakly special subvarieties}\label{monowspsec}

Let $\V$ be a $\Z$-VHS on a smooth, irreducible and quasi-projective algebraic variety $S$ over $\C$, let $(\G,D)$ be its generic Hodge datum, and fix $s_0 \in S^\an$ a Hodge generic point such that the Mumford-Tate group of the Hodge structure carried by $\V_{s_0}$ is equal (and not only isomorphic) to $\G$. The local system $\V_\Q := \V \otimes_\Z \Q$ corresponds to a representation of the topological fundamental group of $S^\an$ with base-point $s_0$:
\[
\rho : \pi_1(S^\an,s_0) \rightarrow \gl(\V_{\Q,s_0})
\]
We define the \textit{algebraic monodromy group $\mon$ of $\V$} as the identity connected component of the $\Q$-Zariski closure of $\rho(\pi_1(S^\an, s_0))$ in $\gl(\V_{\Q,s_0})$. More generally, if $Y \subset S$ is an irreducible algebraic subvariety, we define the \textit{algebraic monodromy group $\mon_Y$ of $Y$ for $\V$} as the algebraic monodromy group of the restricted $\Z$-VHS $\V \vert_{Y^\sm}$ on the smooth locus $Y^\sm$ of $Y$. Then, recall the following celebrated result of André which follows from the theorem of the fixed part and the semi-simplicity theorem of Deligne \cite{hodge2} (in the geometric setting) and Schmid \cite{schmid} (for general $\Z$-VHS).

\begin{theorem}[{\cite[Thm. 1]{andre}}]\label{andre}
Let $Y \subset S$ be an irreducible algebraic subvariety. Then the monodromy group $\mon_Y$ of $Y$ for $\V$ is a normal subgroup of the derived subgroup $\G_Y^\der$ of the generic Mumford-Tate group of $Y$ for $\V$.
\end{theorem}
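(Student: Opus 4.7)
Since both $\mon_Y$ and $\G_Y$ are intrinsic to the restricted VHS $\V|_{Y^\sm}$, I would first replace $(S, \V)$ by $(Y^\sm, \V|_{Y^\sm})$ and reduce to the case $Y = S$. So it suffices to show that for a polarizable $\Z$-VHS $\V$ on a smooth, connected, quasi-projective variety $S$, with generic Mumford--Tate group $\G$ and algebraic monodromy group $\mon$, one has $\mon \triangleleft \G^\der$. Fix a Hodge-generic point $s_0 \in S^\an$.

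The two classical inputs for polarizable $\Z$-VHS, mentioned in the preamble, are: (a) Deligne's \emph{theorem of the fixed part} (resp.\ Schmid's, in the abstract setting), which says that for every tensor construction $T^{a,b}\V$ the subspace of $\rho(\pi_1(S^\an, s_0))$-invariants in $(T^{a,b}\V)_{s_0}$ is a sub-Hodge structure; (b) Deligne's \emph{semi-simplicity theorem}, which says that this monodromy action is semi-simple. From (a) one obtains $\mon \subseteq \G$---every Hodge tensor defining $\G$ at the Hodge-generic point $s_0$ extends to a global Hodge class, hence to a flat (so $\pi_1$-invariant) section, and is therefore preserved by $\mon$. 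From (b), the group $\mon$ is itself semi-simple.

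The crux is to show that $\mathfrak{h} := \mathrm{Lie}(\mon) \subseteq \mathrm{End}(\V_{s_0})$ is a sub-Hodge structure, which (since $\mathrm{End}(\V_{s_0})$ is a $\G$-module and sub-HS of a $\G$-module are exactly the $\G$-submodules) is equivalent to $\mathfrak{h}$ being $\G$-stable, i.e.\ to $\mon \triangleleft \G$. To this end, I would view $\mathfrak{h}$ as the fibre at $s_0$ of the sub-local-system $\underline{\mathfrak{h}} \subseteq \mathrm{End}(\V)$ obtained by parallel transport, which is well defined because monodromy lies in $\mon$ and $\mon$ normalises $\mathfrak{h}$ under the adjoint action. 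Input (b) furnishes a $\pi_1$-stable complement $\underline{\mathfrak{h}}^c$ of $\underline{\mathfrak{h}}$ in $\mathrm{End}(\V)$; the corresponding projector $p \in \mathrm{End}(\mathrm{End}(\V_{s_0}))$ is then $\pi_1$-fixed, and input (a) applied to $\mathrm{End}(\mathrm{End}(\V))$ places $p$ inside a sub-Hodge structure of $\mathrm{End}(\mathrm{End}(\V_{s_0}))$. The most delicate step---and the main obstacle---is then to argue that $p$ is of Hodge type $(0,0)$; I would do this by choosing $\underline{\mathfrak{h}}^c$ to be the polarisation-orthogonal complement $\underline{\mathfrak{h}}^\perp$ (which exists because the trace form is non-degenerate on $\mathfrak{h}$, by semi-simplicity of $\mon$) and matching the Hodge filtration on $\mathrm{End}(\V)$ against the isotypic $\mon$-decomposition. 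This is precisely the place where the polarisability of $\V$ is indispensable.

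Once $\mathfrak{h}$ is established as a sub-Hodge structure, the projector onto it is a Hodge tensor of type $(0,0)$, hence fixed by $\G = \MT(\V_{s_0})$, so that $\mon$ is $\Q$-normal in $\G$. Since $\mon$ is semi-simple by input (b), it has no non-trivial central torus, so the reductive decomposition $\G = \G^\der \cdot Z(\G)^\circ$ forces $\mon \subseteq \G^\der$. Combined with the normality just proved, this yields $\mon \triangleleft \G^\der$, completing the argument.
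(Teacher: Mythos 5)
This theorem is quoted from Andr\'e; the paper gives no proof of its own beyond naming the two inputs (the theorem of the fixed part and Deligne--Schmid semisimplicity), so your sketch must be judged against the standard argument. Your reductions, your two inputs, and the overall strategy of showing that $\mathfrak{h} := \mathrm{Lie}(\mon)$ is a sub-Hodge structure of $\mathrm{End}(\V_{s_0})$, hence $\G$-stable, are all reasonable. But the step you yourself flag as ``the main obstacle'' --- that the flat projector $p$ onto $\underline{\mathfrak{h}}$ has Hodge type $(0,0)$ --- is precisely where the whole content of the theorem sits, and nothing you propose closes it. The fixed part theorem places $p$ inside a weight-$0$ sub-Hodge structure of $\mathrm{End}(\mathrm{End}(\V_{s_0}))$, but a general element of such a sub-structure has components of every type $(k,-k)$; choosing the trace-orthogonal complement makes $p$ self-adjoint for a $\G$-invariant form, which constrains but does not determine its Hodge type, and the individual components $p^{k,-k}$, though flat, need not have image in $\mathfrak{h}$. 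Andr\'e's actual argument sidesteps the Lie algebra and the projector entirely: since $\mon$ is reductive (this is what Deligne's semisimplicity theorem gives), Chevalley's theorem (in the form of Deligne--Milne--Ogus--Shih, Prop.~3.1) identifies $\mon$ with the pointwise fixer $\mathrm{Fix}(T)$ of the set $T$ of all its invariant tensors in the spaces $T^{m,n}V_{s_0}$; these invariants are exactly the flat tensors, so by the theorem of the fixed part each graded piece $T \cap T^{m,n}V_{s_0}$ is a sub-Hodge structure, hence stable under $\G$; therefore $gT = T$ and $g\mon g^{-1} = \mathrm{Fix}(gT) = \mathrm{Fix}(T) = \mon$ for every $g \in \G$. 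You should replace the projector argument by this one.

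There is a second, smaller gap at the end: you deduce $\mon \subseteq \G^\der$ from ``$\mon$ is semisimple by input (b)'', but Deligne's theorem only yields that $\mon$ is \emph{reductive}; semisimplicity of $\mon$ is essentially part of the conclusion (it follows a posteriori once $\mon$ is known to be a connected normal subgroup of the semisimple group $\G^\der$), so invoking it here is circular. The standard direct argument is that $\mon$ lies in the kernel of every character $\chi$ of $\G$: such a $\chi$ is realized on a rank-one sub-VHS of some tensor construction, whose polarization form is definite, so the monodromy acts on it preserving a lattice and a definite form, hence through a finite group, and the connected group $\mon$ acts trivially. Since the common kernel of all characters of the reductive group $\G$ contains $\G^\der$ with finite quotient and $\mon$ is connected, $\mon \subseteq \G^\der$ follows, and combined with normality in $\G$ this gives the statement.
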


In analogy with Proposition \ref{caracspec} one defines:

\begin{definition}
An irreducible algebraic subvariety $Y \subset S$ is said \textit{weakly special for $\V$} if it is maximal for the inclusion among irreducible algebraic subvarieties of $S$ which have algebraic monodromy group $\mon_Y$.
\end{definition}

As in the case of special subvarieties, there is a nice group theoretic description of weakly special subvarieties. A \textit{weakly special subvariety of the Hodge variety $\Gamma \quo D$} is defined (see. \cite[Def. 4.1]{ko}) to be the image of a subvariety of the form
\[
\Gamma_\M \quo D_M \times \{t\} \subset \Gamma_\M \quo D_M \times \Gamma_\NN \quo D_N
\]
by a morphism of Hodge varieties $\Gamma_\M \quo D_M \times \Gamma_\NN \quo D_N \rightarrow \Gamma \quo D$, with $t \in \Gamma_\NN \quo D_N$. Now we have the following:

\begin{proposition}[{\cite[Cor. 4.14]{ko}}]\label{caracweakly}
Let $Y \subset S$ be an irreducible algebraic subvariety. $Y$ is a weakly special subvariety of $S$ for $\V$ if and only if it is the preimage by $\Phi$ of a weakly special subvariety of $\Gamma \quo D$.
\end{proposition}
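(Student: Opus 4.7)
The plan is to establish both implications by combining Andr\'e's Theorem \ref{andre} with the theorem of the fixed part. These two inputs let one decompose the generic Hodge datum of any irreducible algebraic subvariety $Y \subset S$, at the level of adjoints, as an almost-direct product in which the period image of $Y$ becomes constant along the factor complementary to $\mon_Y$.

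For the ``only if'' direction, assume $Y$ is weakly special for $\V$ and let $(\G_Y, D_{G_Y})$ be its generic Hodge datum. Theorem \ref{andre} gives that $\mon_Y$ is normal in $\G_Y^\der$, so passing to the adjoint produces an almost-direct product decomposition $\G_Y^\ad = \mon_Y^\ad \cdot \LL_Y^\ad$ and a corresponding splitting $D_{G_Y} = D_{\mon_Y} \times D_{\LL_Y}$. The theorem of the fixed part implies that the projection of $\Phi|_{Y^\an}$ to the $D_{\LL_Y}$-factor is locally constant; irreducibility of $Y$ then collapses it to a single point, and the maximality of $\G_Y$ as the generic Mumford--Tate group of $\V|_{Y^\sm}$ forces this point $t$ to be Hodge generic. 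Hence $\Phi(Y^\an) \subseteq W := \Gamma_{\mon_Y} \quo D_{\mon_Y} \times \{t\}$, which is weakly special in $\Gamma \quo D$. Letting $Y'$ be the irreducible component of $\Phi^{-1}(W)$ containing $Y$, its period image still lies in $W$, so $\mon_{Y'}$ acts trivially on the $D_{\LL_Y}$-factor and thus $\mon_{Y'} \subseteq \mon_Y$; combined with the reverse inclusion from $Y \subseteq Y'$ and the weak speciality of $Y$, one concludes $Y = Y'$.

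For the ``if'' direction, let $Y$ be an irreducible component of $\Phi^{-1}(W)$ for some weakly special $W = \Gamma_\M \quo D_M \times \{t\} \subset \Gamma \quo D$, and denote by $\mon_Y$ its algebraic monodromy group. Suppose $Y \subseteq Y' \subset S$ is an irreducible algebraic subvariety with $\mon_{Y'} = \mon_Y$. Applying the structural argument of the previous paragraph to $Y'$ produces a weakly special subvariety $W' = \Gamma_{\mon_{Y'}} \quo D_{\mon_{Y'}} \times \{t'\}$ containing $\Phi(Y'^\an)$. Since $Y \subseteq Y'$, the constant coordinate $t'$ is pinned down by $\Phi(Y^\an)$, and after identifying factors one finds $W' \subseteq W$. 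Then $Y' \subseteq \Phi^{-1}(W)$, and since $Y$ is an irreducible component of this preimage, $Y = Y'$. This shows $Y$ is weakly special.

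The main technical obstacle is the compatibility of the monodromy/centralizer decompositions along an inclusion $Y \subseteq Y'$: the splittings $\G_Y^\ad = \mon_Y^\ad \cdot \LL_Y^\ad$ and $\G_{Y'}^\ad = \mon_{Y'}^\ad \cdot \LL_{Y'}^\ad$ are produced independently by Andr\'e's theorem and must be identified once $\mon_Y = \mon_{Y'}$. The cleanest resolution is to work inside the ambient adjoint Hodge datum $(\G^\ad, D)$ and to characterise each $\LL_\bullet$ intrinsically, e.g.\ as the identity component of the centralizer of $\mon_\bullet$ inside $\G_\bullet^\der$, which makes the decompositions functorial under inclusion. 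A subsidiary point is upgrading ``locally constant'' to ``constant'' for the $\LL$-projection, which is standard given irreducibility of $Y^\an$ after passing to the finite \'etale cover on which $\Phi$ is globally defined.
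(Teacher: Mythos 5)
The paper gives no proof of this proposition --- it is quoted directly from \cite[Cor.~4.14]{ko} --- so your argument has to be measured against the standard proof there. Your overall architecture is the right one and matches it: Andr\'e's Theorem \ref{andre} together with the constancy of the period map on the factor complementary to the monodromy (this is \cite[Lem.~4.12]{ko}, which is the precise form of the ``fixed part'' input you invoke) produce the candidate weakly special subvariety $W$ of $\Gamma \quo D$, and maximality arguments close both implications. The ``if'' direction is essentially correct as you sketch it, modulo two standard but non-trivial steps that you elide: first, that the image in $\G^\ad$ of the algebraic monodromy group of a component of $\Phi^{-1}(W)$ is contained in $\M$ (this is what makes ``after identifying factors one finds $W' \subseteq W$'' work, and it uses that the projection of the monodromy to the $\LL$-factor lies in the intersection of an arithmetic lattice with a compact stabilizer, hence is finite, hence has trivial connected Zariski closure); second, that the $W$ you construct really is a weakly special subvariety of $\Gamma\quo D$ in the paper's sense, i.e.\ arises from a Hodge sub-datum of product type of the \emph{ambient} adjoint datum $(\G^\ad, D)$ --- you flag this at the end but do not resolve it.

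The genuine gap is in the ``only if'' direction. Your $Y'$, the irreducible component of $\Phi^{-1}(W)$ containing $Y$, is a priori only a closed \emph{analytic} subvariety of $S^\an$. The definition of a weakly special subvariety of $S$ quantifies over irreducible \emph{algebraic} subvarieties, and $\mon_{Y'}$ is only defined (via $\V\vert_{Y'^{\sm}}$) when $Y'$ is algebraic; so the concluding step ``combined with the reverse inclusion \dots and the weak speciality of $Y$, one concludes $Y = Y'$'' cannot be run as written. What is missing is the non-trivial input that irreducible components of $\Phi^{-1}(W)$ are algebraic for $W$ weakly special in $\Gamma\quo D$: this is where o-minimality enters in \cite{ko} (definability of the period map in $\R_{\an,\exp}$, a finiteness argument to control the infinitely many $\Gamma$-translates of $D_M \times \{\tilde{t}\}$ meeting a fundamental set, and the definable Chow theorem of Peterzil--Starchenko), or alternatively Theorem \ref{algpermap} plus a definability argument. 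Replacing $Y'$ by its Zariski closure does not repair the argument, since one then loses the containment of the period image in $W$ and hence any control on the monodromy. Once this algebraicity is granted, the rest of your reasoning goes through.
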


\begin{remark}\label{monperiodmap}
Note that by André's Theorem \ref{andre}, the derived subgroup of $\G$ factors as $\G^\der = \mon \cdot \LL$, and one shows \cite[Lem. 4.12]{ko} that the period map $\Phi$ is constant equal to some Hodge generic $t_L \in \Gamma_\LL \quo D_L$ when projected to the factor corresponding to $\LL$. In the sequel, we will omit this constant factor and simply write $\Phi : S^\an \rightarrow \Gamma_\mon \quo D_H$ for the period map to signify that up to passing to a finite étale cover we have $\Phi : S^\an \rightarrow \Gamma_\mon \quo D_H \times \{t_L\} \subset \Gamma \quo D$.
\end{remark}

\subsection{Algebraicity of period maps} 

We will make use of the following result on algebraicity of period maps, recently proven by Bakker-Brunebarbe-Tsimerman.

\begin{theorem}[{\cite[Thm. 1.1]{bbt}}] \label{algpermap}
Let $\V$ be a polarizable $\Z$-VHS on a smooth and quasi-projective algebraic variety $S$ over $\C$, and denote by $\Phi : S^\an \rightarrow \Gamma \quo D$ the associated period map. The map $\Phi$ factors uniquely as $\Phi = \iota \circ f^\an$ where $f : S \rightarrow T$ is a dominant regular map to an algebraic variety $T$, and $\iota : T^\an \hookrightarrow \Gamma \quo D$ is a closed analytic immersion.
\end{theorem}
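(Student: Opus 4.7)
The plan is to reduce this algebraicity statement to results in tame geometry. The broad strategy has three steps: put a definable structure on the target $\Gamma \quo D$, show the period map is definable in this structure, and then invoke a definable Chow-type theorem to extract the algebraic target $T$.

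First, I would equip $\Gamma \quo D$ with a complex analytic structure definable in the o-minimal expansion $\R_{\mathrm{an},\exp}$. The compact dual $\ch{D}$ is a projective variety and embeds $D$ as a semi-algebraic open subset. Using the theory of Siegel sets, one constructs a fundamental set $\mathfrak{F} \subset D$ for the $\Gamma$-action which is semi-algebraic in $\ch{D}$; moreover, finitely many $\Gamma$-translates of $\mathfrak{F}$ suffice to describe the overlap relation. Patching gives $\Gamma \quo D$ the desired definable complex analytic structure, independent of the choices made.

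Second, I would show that $\Phi : S^{\an} \rightarrow \Gamma \quo D$ is $\R_{\mathrm{an},\exp}$-definable. Fix, via Hironaka, a smooth projective log-compactification $\overline{S}$ of $S$ with simple normal crossings boundary. Near a boundary point, Schmid's nilpotent orbit theorem decomposes a local lift of $\Phi$ into the exponential of the nilpotent monodromy logarithms applied to a holomorphic map that extends across the divisor. The exponential of the logarithm is not definable globally, but it is definable on a bounded vertical strip; the crucial point is that by a Baily-Borel type argument, lifts of $\Phi$ can be arranged to land inside a suitable Siegel set, at which stage definability becomes visible. The globality of the statement on $S^{\an}$ is then obtained by patching using the compactness of $\overline{S}^{\an}$.

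Third, I would invoke the definable Chow / GAGA theorem: a closed complex analytic subvariety of a definable complex analytic variety which is itself definable admits a unique structure of complex algebraic variety. Applied to the graph of $\Phi$ inside $S^{\an} \times (\Gamma \quo D)$ (which is closed, analytic, and definable by the previous step), this produces an algebraic subvariety whose projection to $S$ is an isomorphism. Then Stein factorization of the map $S \rightarrow S \times (\Gamma \quo D) \rightarrow \Gamma \quo D$ yields the dominant regular $f : S \rightarrow T$ and the closed analytic immersion $\iota : T^{\an} \hookrightarrow \Gamma \quo D$, with uniqueness coming from the universal property of Stein factorization.

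The hard part is the definability of $\Phi$ near the boundary: controlling the asymptotic wild growth of period mappings and matching it to a Siegel fundamental domain in a definable way requires the full force of Schmid's $\spl_2$-orbit theorem and fine combinatorial bookkeeping of how monodromy nilpotents interact with the stratification of $\overline{S} \setminus S$. Once definability is in hand, the rest of the argument is formal.
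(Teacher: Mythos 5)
The paper does not prove this statement: it is quoted as \cite[Thm.~1.1]{bbt} and used as a black box, so there is no internal proof to compare against, and your sketch must be measured against the actual argument of Bakker--Brunebarbe--Tsimerman. Your first two steps are an accurate summary of the prerequisite theorem of Bakker--Klingler--Tsimerman: the Siegel-set construction of an $\R_{\an,\exp}$-definable structure on $\Gamma \quo D$, and the definability of $\Phi$ obtained from the nilpotent and $\spl_2$-orbit theorems on a log-smooth compactification. You are also right that this boundary analysis is the hard analytic input to definability.

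The gap is in your third step. The Peterzil--Starchenko ``definable Chow'' theorem algebraizes closed definable analytic subvarieties of a complex \emph{algebraic} variety; it says nothing about closed definable analytic subvarieties of a general definable analytic space. The statement you invoke --- that a definable closed analytic subvariety of a definable complex analytic variety carries a unique algebraic structure --- is false (a disc in $\C$ is already a counterexample), and if it were true the theorem at hand would be immediate. The entire difficulty is that $\Gamma \quo D$ is not algebraic when $D$ is not Hermitian symmetric, so neither the graph of $\Phi$ in $S^{\an} \times (\Gamma \quo D)$ nor the image $\Phi(S^{\an})$ sits inside an algebraic ambient space; moreover the graph of any map projects isomorphically onto its source for trivial reasons, so algebraizing the graph tells you nothing about the image. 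What the actual proof requires, and what your sketch omits, is (i) a reduction to the case of a \emph{proper} period map, by extending $\Phi$ across the boundary strata with finite local monodromy so that the extended map becomes proper onto its image, and (ii) the genuinely new content of \cite{bbt}: an o-minimal GAGA theorem for coherent sheaves on definable analytic spaces, one of whose consequences is that a proper definable analytic map from (the definabilization of) an algebraic variety to a definable analytic space factors through a closed immersion of an algebraic space. The Stein-type factorization and the uniqueness then take place in the definable analytic category, and the algebraic (indeed quasi-projective) structure on $T$ ultimately comes from a positivity argument for the Griffiths line bundle, not from Stein factorization of a non-proper map into a non-algebraic target.
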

\subsection{Functional transcendance}
Our arguments heavily rely on a functional transcendence result, Bakker and Tsimerman's Ax-Schanuel theorem, which will allow us to force some intersections to have expected dimension. Recall that the Mumford-Tate domain $D_H$ embeds in its compact dual $\check{D}_H$ which is a projective algebraic variety over $\C$. This embedding allows one to define an irreducible algebraic subvariety of $S \times D_H$ as an analytic irreducible component of the intersection $(S \times D_H) \cap V$ of $S \times D_H$ with an algebraic subvariety $V$ of $S \times \check{D}_H$. Bakker-Tsimerman prove:

\begin{theorem}[{\cite[Thm. 1.1]{baktsi}}] \label{axschan}
Let $W \subset S \times D_H$ be an algebraic subvariety. Let $U$ be an irreducible complex analytic component of $W \cap (S \times_{\Gamma_\mon \quo D_H} D_H)$ such that
\[
\codim_{S \times D_H} U < \codim_{S \times D_H} W + \codim_{S \times D_H} \big(S \times_{\Gamma_\mon \quo D_H} D_H\big).
\]
Then the projection of $U$ to $S$ is contained in a strict weakly special subvariety of $S$ for $\V$.
\end{theorem}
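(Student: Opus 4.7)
The plan is to follow the Pila-Zannier paradigm in the Hodge-theoretic setting, as carried out by Bakker and Tsimerman. The goal is to extract from the atypical component $U$ a positive-dimensional algebraic subgroup of $\G$ defined over $\Q$, and then to read off the resulting weakly special subvariety from the description recalled in Section \ref{monowspsec}.

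First, using Theorem \ref{algpermap} one may assume that the period map is a closed analytic immersion, so that $S \times_{\Gamma_\mon \quo D_H} D_H$ identifies with the $\Gamma_\mon$-orbit of the graph of a chosen lift $\tilde\Phi : \suniv \to D_H$. After fixing a fundamental domain $\mathcal{F} \subset D_H$ for $\Gamma_\mon$, the definability theorem of Bakker-Klingler-Tsimerman makes the restriction of this graph to $S \times \mathcal{F}$ definable in $\R_{\mathrm{an,exp}}$. One would then pass to a smooth point of $U$, replace $W$ by the Zariski closure inside $S \times \check{D}_H$ of a small analytic neighborhood of that point in $U$, and study the projection of this closure to $\check{D}_H$.

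The core of the argument lies in extracting a non-trivial algebraic stabilizer: the atypicality hypothesis forces the above projection to fail to be Zariski-dense in $\check{D}_H$, and a volume/curvature estimate in the spirit of Mok, exploiting the horizontality of the Griffiths distribution on $D_H$ and the negativity of the relevant bisectional curvatures, shows that the identity component $\mathbf{N} \subseteq \G_\C$ of the stabilizer of this projection is positive-dimensional. This step is the main obstacle and is precisely the analytic heart of the theorem, since it is where one genuinely leaves dimension counting and enters the realm of algebraic structure. Given this, since $U$ is preserved up to $\Gamma_\mon$-translation and $\Gamma_\mon$ is Zariski-dense in $\mon$, the group $\mathbf{N}$ is normalized by $\mon$; by Andr\'e's Theorem \ref{andre} applied to the appropriate sub-VHS, $\mathbf{N}$ then descends to a proper $\Q$-subgroup of $\G^\der$. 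The weakly special description recalled in Section \ref{monowspsec} then produces a strict weakly special subvariety of $S$ containing the projection of $U$, as desired.
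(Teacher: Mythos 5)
This statement is not proved in the paper at all: it is Bakker--Tsimerman's Ax--Schanuel theorem, imported verbatim from \cite{baktsi} and used as a black box in the proof of Theorem \ref{main}. So there is no internal proof to compare your attempt against, and the only fair way to evaluate your proposal is as a sketch of the original argument.

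As such a sketch, it names the right paradigm (Pila--Zannier via definability of period maps) but has a genuine gap exactly where you flag ``the main obstacle'': the production of a positive-dimensional algebraic stabilizer is asserted, not derived, and the mechanism you describe is not the one that actually works. A curvature/negativity estimate on $D_H$ does not by itself yield an algebraic subgroup. In \cite{baktsi} the volume estimates (in the style of Hwang--To, adapted to the positivity of the Griffiths line bundle along horizontal directions) serve only to show that the atypical component $U$ meets ``many'' $\Gamma_\mon$-translates of a definable fundamental domain --- polynomially many of bounded height; one then needs the Pila--Wilkie counting theorem, applied to the definable set of group elements $g$ for which $g\cdot W$ meets $U$ in excessive dimension, together with an induction on the dimensions involved, to upgrade this count to positive-dimensional semialgebraic families of such $g$, whose Zariski closure finally gives the nontrivial algebraic subgroup $\mathbf{N}$. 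Your sketch omits Pila--Wilkie and the induction entirely. Two further points need care: (a) Theorem \ref{algpermap} is neither needed nor used here --- what is needed is definability of the period map, which is a different (earlier) theorem of Bakker--Klingler--Tsimerman; (b) the claim that $\mathbf{N}$ is normalized by $\mon$ ``since $U$ is preserved up to $\Gamma_\mon$-translation'' is too quick, because a lattice element translates $U$ to a \emph{different} component of the intersection, so normality must be extracted from the family of all translates, again via the inductive setup. In short, the proposal is a reasonable roadmap but does not constitute a proof; the theorem should remain, as in the paper, a citation.
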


\begin{remark}
\label{ominrem}
The proof of Bakker-Tsimerman's Ax-Schanuel crucially relies on the definability of period maps in the o-minimal structure $\R_{\an,\exp}$. It has been recently reproven in a more general setting using an o-minimal free approach (\cite{diffaxschan}).
\end{remark}

\section{Density of typical special subvarieties}
Let $\V$ be a polarizable $\Z$-VHS on a smooth, irreducible and quasi-projective variety $S$ over $\C$. Denote by $(\G,D)$ the generic Hodge datum, by $\Gamma \subset \G(\Q)^+$ some torsion-free arithmetic lattice containing the image of the monodromy representation of $\V$, by $\mon$ the algebraic monodromy group and by $\Phi : S^\an \rightarrow \Gamma_\mon \quo D_H \subset \Gamma \quo D$ the associated period map (see Remark \ref{monperiodmap}).

In the sequel, definability will always be meant in the o-minimal structure $\anexp$. We use the notion of definable manifold appearing in \cite[Def. 2.1, Def. 2.2]{bbt}. The natural Borel embedding $D \hookrightarrow \check{D}$ as an open subset of the compact dual defines a canonical structure of $\anexp$-definable (and even $\alg$-definable) manifold on $D$. Similarly, the algebraic structure on $S$ induces a canonical structure of $\anexp$-definable (and again even $\alg$-definable) manifold on $S^\an$. Definability on $D$ and $S^\an$ will always be meant with respect to these structures.

\subsection{Definable fundamental sets}

In the proofs and statements of intermediate lemmas, we will need to work with definable fundamental sets for the uniformization maps that appear, which are compatible with period maps. However, we won't need the delicate definable factorization in \cite{bkt} of period maps through fundamental sets for the $\Gamma$-action on $D$, but only, following \cite[Sect. 3]{baktsi} and \cite[Sect. 6.1.1]{bku}, definable fundamental sets for the covering action of the fundamental group of $S$ on a suitable cover of $S$, to which the restriction of the lifted period map is definable. We recall quickly the construction for completeness.

Let $(\Bar{S}, E)$ be a log-smooth compactification of $S$. By the Borel monodromy theorem \cite[(4.5)]{schmid}, and because our results are unaffected by passing to a finite étale cover of $S$ (see Remark \ref{zariskiopen}), we can and do assume that $\V$ has unipotent monodromy along irreducible branches of $E$. The datum of this compactification allows us to choose a definable atlas of $S$ for the definable structure induced by its algebraic structure, given by finitely many (by compactness of $\Bar{S}$) polydisks $P_i = \Delta^{k_i} \times (\Delta^*)^{l_i}$ (because the divisor $E$ has simple normal crossings). Let $p_i = \exp : \tilde{P}_i := \Delta^{k_i} \times \poinca^{l_i} \rightarrow P_i$ be the universal cover and pick some $b, c > 0$ such that, denoting by $\Sigma = (-b,b) \times (c, +\infty[ \subseteq \poinca$ the associated vertical strip, the set $\F_i := \Delta^{k_i} \times (\Sigma)^{l_i}$ is a fundamental set for the covering action of $\pi_1(P_i) \cong \Z^{l_i}$. We endow $\F_i$ with the natural product definable structure, with on $\poinca$ the structure coming from the natural embedding $\poinca \hookrightarrow \p^1$. With these structures, the covering maps $p_i$ are definable when restricted to $\mathcal{F}_{i}$.

For each $i$, choose a local lifting $\tilde{\Phi}_i$ of $\Phi \vert_{P_i}$, and letting $\F$ be the disjoint union of these local fundamental sets over the charts, view the collection of all such lifts as a map on $\F$. Then, because we assumed unipotent monodromy at infinity, a straightforward application of the Nilpotent Orbit Theorem \cite[(4.12)]{schmid} (see \cite[Lem. 3.1]{baktsi}) shows that, after possibly shrinking the initial charts, we get the following diagram in the category of definable complex analytic manifolds:
\begin{equation}
\label{Idefdiag}
\xymatrix{\F \ar[r]^{\tilde{\Phi}_\F} \ar[d]_\exp & D \\ S^\an}
\end{equation}
\begin{notation}
In the sequel, we fix once and for all the data needed to construct this diagram, and will denote $\I = \tilde{\Phi}_\F(\F)$. It is not a priori connected but, although it won't be useful in the sequel, it is possible to choose the local lifts in the construction so that it is so. In particular, if two local lifts $\tilde{\Phi}_{i_1}$ and $\tilde{\Phi}_{i_2}$ have intersecting images in $\Gamma \backslash D$, then their images in $D$ also intersect after replacing $\tilde{\Phi}_{i_1}$ by $\gamma \cdot \tilde{\Phi}_{i_1}$ for some $\gamma \in \Gamma$. Since the image of $\I$ in $\Gamma \backslash D$ is just the (necessarily connected) image of $S$, we can therefore ensure $\I$ is connected after finitely many such adjustments.
\end{notation}

\subsection{Statement of the result in the general setting}

In order to tackle product phenomena we work, as in \cite[Sect. 1.2]{bku}, in the setting of \cite[Chap. III]{ggk} which we recall now for completeness. The monodromy group $\mon$ decomposes as an almost direct product of its $\Q$-simple factors:
\[
\mon = \G_1  \cdots \G_n.
\]
After replacing $S$ by a finite étale cover (see Remark \ref{zariskiopen}), the above decomposition induces a factorization of the period map
\[
\Phi : S^\an \rightarrow \Gamma_1 \quo D_1 \times \cdots \times \Gamma_n \quo D_n,
\]
where for each $i$, $D_i$ is some Mumford-Tate subdomain of $D$ associated to $\G_i$ and $\Gamma_i = \Gamma \cap \G_i(\Q)^+$. For each $I \subset \{1, \cdots, n\}$, we write $D_I := \prod_{i \in I} D_i$ which is a Mumford-Tate domain for the group $\G_I := \prod_{i \in I} \G_i$ endowed with a projection map $p_I : D \rightarrow D_I$. We also write
\[ q_I : \Gamma \quo D \rightarrow \Gamma_I \quo D_I := \prod_{i \in I} \Gamma_i \quo D_i\]
for the natural projection and $\Phi_I := q_I \circ \Phi$ for the projected period map.

\vspace{0.5em}

\begin{remark}\label{splitaction}
The action of $\mon(\R)^+$ on $D_1 \times \cdots \times D_n$ is compatible with the product decomposition. Namely, for $h \in \mon(\R)^+$, we can write $h = g_1 \cdots g_n$ with $g_i \in \G_i(\R)$ and then, for $x = (x_1, \cdots, x_n)$, one has:
\[ h \cdot x = (g_1 \cdot x_1, \cdots, g_n \cdot x_n).\]
This applies for any choice of decomposition $h = g_1 \cdots g_n$. The same applies more generally for the decomposition $D = D_{H} \times D_{L}$ associated to the almost-direct factorization $\mathbf{G}^{\textrm{der}} = \mathbf{H} \cdot \mathbf{L}$.
\end{remark}

\vspace{0.5em}

When trying to give a sufficient condition generalizing that of Theorem \ref{mainqsimple} to the not necessarily $\Q$-simple monodromy case, Example \ref{factorwisexample} suggests that one should at least impose conditions like $\V$-admissibility on each factor, including on the factor where the period map is constant.

\begin{definition}\label{nf}
Let $(\M, D_M) \subsetneq (\G, D)$ be a strict Hodge sub-datum. It is called
\begin{itemize}
\item[(i)] \textit{factorwise $\V$-admissible} if for every non-empty set of indexes $I \subseteq \{1, \cdots, n\}$ the inequality
\[
\dim p_I(D_M) + \dim \Phi_I(S^\an) - \dim D_I \geq 0
\]
holds.
\item[(ii)] \textit{factorwise strongly $\V$-admissible} if for every non-empty set of indexes $I \subseteq \{1, \cdots, n\}$, the strict inequality
\[
\dim p_I(D_M) + \dim \Phi_I(S^\an) - \dim D_I > 0
\]
holds.
\end{itemize}
\end{definition}

As explained in Remark \ref{constfactorrem}, one should also add some condition on the constant factor:

\begin{definition}
A strict Hodge sub-datum $(\M,D_M) \subsetneq (\G,D)$ is said to be \textit{full on the $\LL$-factor} if, writing $\G^\der = \mon \cdot \LL$ as an almost-direct product, the normal subgoup $\LL$ of $\G^\der$ is contained in $\M$.
\end{definition}

We can now state the main result of this paper:

\begin{theorem}\label{main}
Let $\V$ be a polarizable $\Z$-VHS on a smooth, irreducible and quasi-projective variety $S$ over $\C$, with generic Hodge datum $(\G,D)$. Let $(\M, D_M) \subsetneq (\G,D)$ be a strict Hodge sub-datum.
\begin{itemize}
    \item[(i)] If $(\M,D_M)$ is full on the $\LL$-factor and factorwise $\V$-admissible, $\HLM$ is analytically dense in $S^\an$.
    \item[(ii)] If $\mon = \G^\der$ and $(\M,D_M)$ is factorwise strongly $\V$-admissible, $\HLM_\typ$ is analytically dense in $S^\an$.
\end{itemize}
\end{theorem}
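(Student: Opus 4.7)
The plan is to combine a transversality argument in $D$ with density of $\G(\Q)^+$ in $\G(\R)^+$, upgraded in part (ii) via the Ax--Schanuel Theorem \ref{axschan} to control typicality.

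First I would fix an open connected $B \subset S^\an$ small enough that $\Phi$ admits a lift $\tilde{\Phi} : B \to D$, and set $p_0 := \tilde{\Phi}(s_0)$ for some $s_0 \in B$. Since $\G(\R)^+$ acts transitively on $D$, there exists $g_0 \in \G(\R)^+$ with $p_0 \in g_0 \cdot D_M$. Part (i) reduces to finding $g \in \G(\Q)^+$ close to $g_0$ with $g \cdot D_M \cap \tilde{\Phi}(B) \neq \emptyset$, since any such $g$ gives a point of $\HLM \cap B$ by definition. Accordingly I would study
\[
\Omega := \{ g \in \G(\R)^+ : g \cdot D_M \cap \tilde{\Phi}(B) \neq \emptyset \}
\]
and show it is open in $\G(\R)^+$. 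Openness at $g_0$ follows by applying the implicit function theorem to the smooth map $(g,s) \mapsto g^{-1}\tilde{\Phi}(s) : \G(\R)^+ \times B \to D$: one checks that its differential at $(g_0, s_0)$, composed with the quotient $T_{g_0^{-1} p_0} D \to T_{g_0^{-1} p_0} D / T_{g_0^{-1} p_0} D_M$, is surjective precisely when the projections of $d\tilde{\Phi}(T_{s_0} B)$ and $T D_M$ to each factor $D_I$ of the monodromy decomposition sum to $\dim D_I$---that is, when factorwise $\V$-admissibility holds. The full-on-$\LL$-factor hypothesis is what makes $\Omega$ non-empty in the first place, ensuring that $g_0 \cdot D_M$ contains the constant value of the $\LL$-component of $\Phi$. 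Once $\Omega$ is open and non-empty, density of $\G(\Q)^+$ in $\G(\R)^+$ yields $g \in \Omega \cap \G(\Q)^+$ and thus a point of $\HLM \cap B$.

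For (ii), I assume $\mon = \G^\der$, so the $\LL$-factor is trivial, and introduce $\Omega_\typ \subset \Omega$ consisting of those $g$ for which $g \cdot D_M \cap \tilde{\Phi}(B)$ has exactly the expected dimension. Strong factorwise admissibility makes this expected dimension strictly positive, so each $g \in \Omega_\typ$ yields a positive-dimensional typical special subvariety of $S$ via Definition \ref{typdef}. It remains to show $\Omega_\typ$ is dense in $\Omega$. Any $g \in \Omega$ whose intersection has dimension strictly greater than expected produces, by Ax--Schanuel (Theorem \ref{axschan}), an atypical intersection component whose projection to $S$ is contained in a strict weakly special subvariety. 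Since the family of strict weakly special subvarieties of $S$ is countable, the bad locus in $\G(\R)^+$ is a countable union of proper real-analytic subsets and hence meager, so $\Omega_\typ$ is residual in $\Omega$; density of $\G(\Q)^+$ in $\Omega_\typ$ then supplies the required typical point of $B$.

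The main obstacle is the openness step for $\Omega$ and the identification of factorwise admissibility as the precise transversality condition: as Example \ref{factorwisexample} illustrates, mere total admissibility does not suffice, because some factor projection $\pi_I$ can obstruct the intersection and confine the candidate $g$'s to a lower-dimensional subvariety of $\G(\R)^+$ in which $\G(\Q)^+$ is not automatically dense. The passage from (i) to (ii) via Ax--Schanuel is comparatively clean once openness is in hand, though one must carefully track the distinction between weakly special and special subvarieties; the hypothesis $\mon = \G^\der$ in (ii) aligns these two notions up to Hecke action by eliminating nontrivial $\LL$-translations.
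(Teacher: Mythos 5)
Your overall strategy (perturb $g$ to a rational element using density of $\G(\Q)^+$ in $\G(\R)^+$, and invoke Ax--Schanuel to control atypicality) is the right one, but the proposal has two genuine gaps. The first is in part (i): you claim that the surjectivity needed for the implicit function theorem --- i.e.\ that $T_{p_0}(g_0\cdot D_M)+d\tilde\Phi(T_{s_0}B)=T_{p_0}D$ --- holds ``precisely when'' factorwise $\V$-admissibility holds. This is false: factorwise admissibility is a purely numerical inequality on dimensions of the relevant images, and two analytic germs whose dimensions sum to at least $\dim D$ can still be tangent to high order at $p_0$, so that the sum of tangent spaces is proper and the projection of $F^{-1}(D_M)$ to $G$ fails to be open at $g_0$ (equivalently, the intersection is excess-dimensional and can disappear under perturbation of $g$ --- think of two coincident discs in $\C^2$ pushed apart). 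Ruling out this excess intersection is the entire content of the theorem and is exactly where Ax--Schanuel must enter \emph{already in part (i)}: one shows that if the component $U$ of $\pi(g\cdot D_M)\cap\Phi(S^\an)$ through a Hodge generic point has larger than expected dimension, then Theorem \ref{axschan} places $U$ inside the image of a minimal strict weakly special $Y$ with $\mon_Y=\prod_{i\in I}\mon_i$, a second application of Ax--Schanuel inside $Y$ shows $U$ has expected dimension there, and the inequality $d_H(\M,D_M)\geq d_I(\M,D_M)$ of Lemma \ref{reform} yields a contradiction. Only after establishing that the intersection along $U$ is \emph{proper} does one get stability under perturbation of $g$ (properness of a complex-analytic intersection is an open condition; non-emptiness then persists because $d\geq 0$), and hence openness of your set $\Omega$ near $g_0$. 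Your proposal defers Ax--Schanuel entirely to (ii) and so has no mechanism to prove the openness it needs.

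The second gap is in part (ii): typicality in Definition \ref{typdef} is measured against the component's \emph{own} generic Hodge datum $(\G_Y,D_{G_Y})$, which may be strictly smaller than a conjugate of $(\M,D_M)$. A component of $\pi(g\cdot D_M)\cap\Phi(S^\an)$ of the expected dimension $d$ relative to $D_M$ is therefore \emph{not} automatically typical: if its Mumford--Tate group degenerates to some $\M'\subsetneq\M$, the same component is an excess intersection with the smaller domain $D_{M'}$ and hence atypical. Your $\Omega_{\typ}$ excludes only excess intersection with $D_M$ itself, so it does not address this. Relatedly, your Baire-category step cannot close the argument: even granting that the bad locus is a countable union of proper real-analytic subsets, its complement is merely comeager, and a comeager subset of $\mathcal{V}$ need not meet the countable set $\G(\Q)^+$ at all. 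One needs the stronger finiteness input that Mumford--Tate subdomains of $D_M$ lie in \emph{finitely many} definable families $g'\mapsto g'\cdot D^{(k)}$, prove (again via Ax--Schanuel and now the strict inequality $d_H(\M,D_M)>d_I(\M,D_M)$) that each bad locus $\mathcal{G}_k\cap\mathcal{V}$ has closure of strictly smaller definable dimension, and then pick a rational point in the resulting open dense subset of $\mathcal{V}$.
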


\subsection{Reformulation of factorwise $\V$-admissibility}

To prove this result, we will need to reformulate the factorwise $\V$-admissibility condition in a way which is better suited to our proof, although more complicated to define. 
First define:

\begin{definition}
Let $(\M,D_M) \subsetneq (\G,D)$ be a strict Hodge sub-datum, and $I \subseteq \{1, \cdots, n\}$ be a non-empty set of indexes whose complementary set we denote by $I^c$. A pair $(g,t) \in \G(\R)^+ \times (D_{I^c} \times D_L)$ is called a \textit{$(\I, D_M, D_I)$-intersecting pair} if
\[ 
\I \cap (g \cdot D_M) \cap (D_I \times \{t\}) \neq \emptyset.
\]
\end{definition}

We will need:

\begin{lemma}\label{indepcap}
Let $(\M,D_M) \subsetneq (\G,D)$ be a strict Hodge sub-datum, let $I \subseteq \{1, \cdots, n\}$ be a non-empty set of indexes and $(g,t) \in \G(\R)^+ \times (D_{I^c} \times D_L)$ be some $(\I, D_M, D_I)$-intersecting pair. Up to replacing $S$ by some non-empty Zariski open subset, the quantity
\[
d_I(\M, D_M) := \dim\big((g \cdot D_M) \cap (D_{I} \times \{t\})\big) + \dim\big(\I \cap (D_{I} \times \{t\})\big) - \dim D_I
\]
only depends on the set of indexes $I$ and not on the choice of the $(\I, D_M, D_I)$-intersecting pair $(g,t)$.
\end{lemma}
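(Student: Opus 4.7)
The plan is to analyze the two summands defining $d_I(\M,D_M)$ separately and to show each is constant on the locus of admissible pairs $(g,t)$, combining the algebraicity of period maps (Theorem \ref{algpermap}) for the second summand with the homogeneity of Mumford--Tate domains and a Hodge-genericity constraint for the first.

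For $e(t) := \dim(\Phi(S^\an) \cap \pi(D_I \times \{t\}))$, note that this is the dimension (in $\Gamma \quo D$) of the fiber of $\Phi_{I^c} : S^\an \to \Gamma_{I^c} \quo D_{I^c}$ over $\pi_{I^c}(t)$. Applying Theorem \ref{algpermap} to both $\Phi$ and $\Phi_{I^c}$ and invoking the uniqueness of the resulting factorizations produces a commuting diagram involving dominant regular maps $f : S \to T$ and $f_{I^c} : S \to T_{I^c}$ and a dominant algebraic map $\psi : T \to T_{I^c}$. Upper semi-continuity of fiber dimension yields a dense Zariski open $U \subset T_{I^c}$ on which $\psi$ has fibers of generic dimension $\dim T - \dim T_{I^c} = \dim \Phi(S^\an) - \dim \Phi_{I^c}(S^\an)$; replacing $S$ by the Zariski open $f^{-1}(\psi^{-1}(U))$ then guarantees that every non-empty fiber of $\Phi_{I^c}$ in the restricted $S$ has this same dimension. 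In particular $e(t) = \dim \Phi(S^\an) - \dim \Phi_{I^c}(S^\an)$ for every $t$ satisfying the non-emptiness assumption, independently of $g$.

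For $d(g,t) := \dim((g \cdot D_M) \cap (D_I \times \{t\}))$, consider the $\G(\R)^+$-equivariant projection $\pi_{I^c,\LL} : D \to D_{I^c} \times D_\LL$. I first claim one may assume $\pi_{I^c,\LL}(\M) = \G_{I^c} \times \LL$: if this containment were strict, then any point $y \in (g_{I^c},g_\LL) \cdot \pi_{I^c,\LL}(D_M)$ has a Hodge structure factoring through the $\R$-conjugate $(g_{I^c},g_\LL)\pi_{I^c,\LL}(\M)_\R(g_{I^c},g_\LL)^{-1}$, forcing $\MT(y)$ to be a $\Q$-subgroup of $\G_{I^c} \times \LL$ of dimension at most $\dim \pi_{I^c,\LL}(\M) < \dim(\G_{I^c} \times \LL)$, contradicting the Hodge-genericity of $t$. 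Thus $\pi_{I^c,\LL}|_{D_M}$ is surjective; since $D_M$ is a single $\M(\R)^+$-orbit and the projection is $\M(\R)^+$-equivariant, every non-empty fiber is a translate of $\pi_{I^c,\LL}^{-1}(\pi_{I^c,\LL}(x_0)) \cap D_M$, hence all fibers have the constant dimension $\dim D_M - \dim D_{I^c} - \dim D_\LL = \dim D_M + \dim D_I - \dim D$. Finally, $(g \cdot D_M) \cap (D_I \times \{t\}) = g \cdot (D_M \cap (D_I \times \{t'\}))$ with $t' = (g_{I^c},g_\LL)^{-1} \cdot t$, so this is a fiber of the same projection and hence has the same constant dimension. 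Therefore $d(g,t) = \dim D_M + \dim D_I - \dim D$, independent of $g$ and $t$.

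Summing the two computations yields $d_I(\M,D_M) = \dim D_M + \dim \Phi(S^\an) - \dim \Phi_{I^c}(S^\an) - \dim D$, a quantity depending only on $I$. I expect the main delicate point to be the Hodge-genericity argument in the case $\pi_{I^c,\LL}(\M) \subsetneq \G_{I^c} \times \LL$: one must check that the $\R$-conjugacy built into the translate $g \cdot D_M$ still forces the Mumford--Tate group of any point on that translate to have $\Q$-dimension at most $\dim \pi_{I^c,\LL}(\M)$. Once this dimension comparison is accepted, the rest of the proof is a formal unwinding of homogeneous-space geometry and generic-fiber-dimension arguments.
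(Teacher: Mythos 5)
Your treatment of the second summand is essentially sound and parallel in spirit to the paper's: the paper controls the fibre-dimension jump locus of $p_{I^c}|_{\Phi(S^\an)}$ using definability together with Theorem \ref{algpermap}, while you use generic fibre dimension of an induced algebraic map $\psi : T \to T_{I^c}$ (the existence and algebraicity of $\psi$ deserves a word of justification, but this is minor). The genuine problem is in your treatment of the first summand. Your key claim --- that $\pi_{I^c,\LL}(\M) \subsetneq \G_{I^c}\times\LL$ would force $\MT(t)$ to have dimension at most $\dim \pi_{I^c,\LL}(\M)$, contradicting the Hodge genericity of $t$ --- is false, and it is exactly the point you flag as delicate. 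The element $g$ lies only in $\G(\R)^+$, not in $\G(\Q)^+$. A Hodge structure on the real translate $g \cdot D_M$ factors through the $\R$-group $g\M_\R g^{-1}$, but its Mumford--Tate group is the smallest $\Q$-group through which it factors, and a $\Q$-group need not be contained in (nor have dimension bounded by) a given $\R$-subgroup through which the Hodge structure happens to factor. Indeed, the whole strategy of the paper rests on the fact that for generic real $g$ the points of $\pi(g\cdot D_M) \cap \Phi(S^\an)$ are Hodge generic; if your dichotomy held, such intersections could never contain Hodge generic points. Concretely, take $D = D_1 \times D_2$, $D_M = D_1 \times D_2'$ with $D_2' \subsetneq D_2$ a proper subdomain, and $I = \{1\}$: a generic real translate $g_2 \cdot D_2'$ contains Hodge generic points $t$ of $D_2$, the non-emptiness hypothesis of the lemma is satisfied, and $(g\cdot D_M) \cap (D_1 \times \{t\}) = D_1 \times \{t\}$ has dimension $\dim D_1$, whereas your formula predicts $\dim D_1 - (\dim D_2 - \dim D_2') < \dim D_1$.

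The independence of the first summand is nevertheless true and does not require any surjectivity: the correct constant is $\dim D_M - \dim \pi_{I^c,\LL}(D_M)$, the common dimension of the non-empty fibres of the $\M(\R)^+$-equivariant map $D_M \to \pi_{I^c,\LL}(D_M)$ of homogeneous spaces. The paper proves this by hand: given $x$ and $x'$ in the two intersections, it produces $m \in g'\M(\R)^+g'^{-1}$ such that $mg'g^{-1}$ carries $(g\cdot D_M)\cap(D_I\times\{t\})$ bijectively onto $(g'\cdot D_M)\cap(D_I\times\{t'\})$, whence equality of dimensions. So your argument can be repaired by replacing the surjectivity step with this translation argument; but as written the step is wrong, and the closed formula you derive for $d_I(\M,D_M)$ is incorrect in general, which would propagate (for instance into Lemma \ref{reform}).
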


\begin{remark}\label{modif}
In Lemma \ref{indepcap}, we emphasize that by ``replacing $S$ by some non-empty Zariski open subset'' we mean that in the diagram (\ref{Idefdiag}), we replace $S$ by some non-empty Zariski open subset, the fundamental domain $\F$ by the preimage of this open and $\tilde{\Phi}_\F$ by its restriction to this preimage. In particular, the image $\I$ is replaced by the image of the above restriction of $\tilde{\Phi}_\F$.
\end{remark}

\begin{convention}\label{conv}
Let $(\M,D_M) \subsetneq (\G,D)$ be a strict Hodge sub-datum. By convention, we set $d_\emptyset(\M,D_M) = 0$. 
\end{convention}

We then have the following reformulation, where we write $d_\mon(\M,D_M)$ for $d_{\{1, \cdots,n\}}(\M,D_M)$:

\begin{lemma}\label{reform}
Let $(\M,D_M)$ be a strict Hodge sub-datum. It is:
\begin{itemize}
    \item[(i)] factorwise $\V$-admissible if and only if for every strict set of indexes $I \subsetneq \{1, \cdots, n\}$, the inequality
    \[
    d_\mon(\M,D_M) \geq d_I(\M,D_M)
    \]
    holds.
    \item[(ii)] factorwise strongly $\V$-admissible if and only if for every strict set of indexes $I \subsetneq \{1, \cdots, n\}$, the strict inequality
    \[
    d_\mon(\M,D_M) > d_I(\M,D_M)
    \]
    holds.
\end{itemize}
\end{lemma}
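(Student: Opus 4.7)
The plan is to identify each difference $d_H(\M,D_M) - d_I(\M,D_M)$ with the factorwise $\V$-admissibility quantity associated to the complementary index set $I^c$, and then read off parts (i) and (ii) directly.

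By Lemma \ref{indepcap}, in computing $d_I(\M,D_M)$ I can fix generic $g \in \G(\R)^+$ and generic Hodge-generic $t \in D_{I^c} \times D_L$ satisfying the non-emptiness hypothesis. A standard generic-fiber argument applied to the projection of $g \cdot D_M$ onto $D_{I^c} \times D_L$ then gives
\[
\dim\big((g \cdot D_M) \cap (D_I \times \{t\})\big) \;=\; \dim D_M - \alpha_{I^c},
\]
where $\alpha_{I^c}$ denotes the dimension of the image of $D_M$ under that projection; similarly, using that $\Phi$ is constant along $D_L$ (cf.\ the remark following Theorem \ref{andre}),
\[
\dim\big(\Phi(S^\an) \cap \pi(D_I \times \{t\})\big) \;=\; \dim \Phi(S^\an) - \dim \Phi_{I^c}(S^\an).
\]
Substituting these into the definitions of $d_H$ (the case $I = \{1,\ldots,n\}$) and $d_I$ and subtracting, the $\dim D_M$ and $\dim \Phi(S^\an)$ contributions cancel, leaving
\[
d_H(\M,D_M) - d_I(\M,D_M) \;=\; (\alpha_{I^c} - \alpha_\emptyset) + \dim \Phi_{I^c}(S^\an) - \dim D_{I^c},
\]
where $\alpha_\emptyset$ is the analogous dimension for the projection of $D_M$ to $D_L$ alone.

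The crux of the argument is then to verify the identity $\alpha_{I^c} - \alpha_\emptyset = \dim \pi_{I^c}(D_M)$, after which the right-hand side coincides with the factorwise $\V$-admissibility expression for $J := I^c$. When $\M$ is full on the $\LL$-factor this is immediate: one has $\alpha_\emptyset = \dim D_L$, and the $\LL(\R)^+$-action on $D_M$ identifies each fiber of $D_M \to D_L$ with $\pi_{I^c}(D_M)$. In general the identity should still hold by a direct check using the almost-direct-product decomposition $\G^\der = \mon \cdot \LL$ together with the way $\M$ embeds compatibly into this product structure. Once the identification is in hand, as $I$ ranges over the strict subsets of $\{1,\ldots,n\}$ (with $I = \emptyset$ recovering via Convention \ref{conv} the admissibility condition at $I^c = \{1,\ldots,n\}$), the complement $I^c$ ranges over all non-empty subsets, so parts (i) and (ii) follow by reading the inequalities $\geq$ and $>$ through this correspondence.

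I anticipate the main obstacle to be precisely the identity $\alpha_{I^c} - \alpha_\emptyset = \dim \pi_{I^c}(D_M)$, which amounts to showing that the projection $\pi_{I^c, L}(D_M)$ is, up to dimension, the product of $\pi_{I^c}(D_M)$ and $\pi_L(D_M)$. This is the only step that genuinely uses Hodge-theoretic input beyond fiber-dimension bookkeeping, and it requires careful tracking of how $D_M$ sits inside the product decomposition of $D$, particularly when $\M$ is not full on the $\LL$-factor.
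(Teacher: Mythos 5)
Your argument is essentially the paper's own: both proofs reduce the claim to the identity
\[
d_H(\M,D_M) - d_I(\M,D_M) \;=\; \dim \pi_{I^c}(D_M) + \dim \Phi_{I^c}(S^\an) - \dim D_{I^c}
\]
via generic-fibre dimension counts for the projections of $g\cdot D_M$ and of $\Phi(S^\an)$ onto the relevant factors (the paper runs the same computation with the roles of $I$ and $I^c$ interchanged), after which complementation of index sets gives (i) and (ii). The one point you flag as unresolved --- that $\dim \pi_{I^c \cup L}(D_M) = \dim \pi_{I^c}(D_M) + \dim \pi_L(D_M)$ --- is a genuine subtlety: it holds when $(\M,D_M)$ is full on the $\LL$-factor (as you note, via the transitive $\LL(\R)^+$-action on the $D_L$-coordinate of $D_M$), but it can fail otherwise, for instance when $\M^\der$ sits diagonally across a monodromy factor and $\LL$; the paper's own two-line proof silently elides this by identifying $\dim(g\cdot D_M)$ with the first term of $d_H(\M,D_M)$, i.e.\ by dropping the $\dim \pi_L(g \cdot D_M)$ contribution. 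Since every use of the lemma in the proof of Theorem \ref{main} occurs either under the fullness hypothesis of part (i) or with $\mon = \G^\der$ (so $\LL$ trivial) in part (ii), your proof is complete in exactly the cases where the lemma is applied, and your explicit isolation of the problematic identity is a refinement of, not a gap relative to, the argument in the paper.
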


\section{Proofs}
\subsection{Proof of lemmas \ref{indepcap} and \ref{reform}}
\begin{proof}[Proof of Lemma \ref{indepcap}]
Let us first prove that for any other choice of $(\I, D_M, D_I)$-intersecting pair $(g',t')$, we have:
\[
\dim\big((g \cdot D_M) \cap (D_{I} \times \{t\})\big) = \dim\big((g' \cdot D_M) \cap (D_{I} \times \{t'\})\big).
\]
Take $x \in (g \cdot D_M) \cap (D_{I} \times \{t\})$ and $x' \in (g' \cdot D_M) \cap (D_{I} \times \{t'\})$. Since $g' M g'^{-1}$ acts transitively on $g' \cdot D_M$ (where we set $M = \M(\R)^+$), there is some $m \in g' M g'^{-1}$ sending $(g'g^{-1})x$ to $x'$. We then have
\[
x' \in (mg'g^{-1})((g \cdot D_M) \cap (D_{I} \times \{t\})) = (g' \cdot D_M) \cap (mg'g^{-1})(D_{I} \times \{t\})
\]
and necessarily, $(mg'g^{-1})(D_{I} \times \{t\}) = D_I \times \{t'\}$. Indeed, the element $h := mg'g^{-1} \in \G(\R)^+$ writes $h' \cdot h''$ according to the decomposition of $\G^\der$ as an almost direct product $\G_I \cdot (\G_{I^c} \cdot \LL)$, and acts by $h\cdot (D_I \times \{t\}) = D_I \times \{h''t\}$ (see Remark \ref{splitaction}) which has to be equal to $D_I \times \{t'\}$ as both contain a point $x'$. This proves the above claimed equality of dimensions.

We are left to prove that up to replacing $S$ by some non-empty Zariski open subset, we have
\[
\dim\big(\I \cap (D_{I} \times \{t\})\big) = \dim\big(\I \cap (D_{I} \times \{t'\})\big)
\]
for any $t$ and $t'$ in the set $T \subset D_{I^c} \times D_L$ of second projections of $(\I,D_M,D_I)$-intersecting pairs. 

Let $\psi$ denote the composition of the lifted period map $\tilde{\Phi}_\F : \F \rightarrow D$ with the projection $p : D \rightarrow D_{I^c} \times D_L$, so that $T = \psi(\F)$. For $t \in T$, we denote the corresponding fibers by $\F_t := \psi^{-1}(t)$ and $\I_t := \I \cap p^{-1}(t) = \tilde{\Phi}_\F(\F_t)$. The map $\psi$ is complex analytic and definable. Therefore, if $C$ denotes the minimal dimension of a fiber $\F_{\psi(z)}$ for $z$ varying in $\F$, the set
\[
\F' := \{ z \in \F \hspace{0.1cm} | \hspace{0.1cm} \dim_z \F_{\psi(z)} > C \}
\]
is a strict definable closed analytic subset of $\F$.

By definability of $\exp$, it follows that the set \[S' := \exp(\F')\] is a strict definable subset of $S$. 

\begin{lemma}
The subset $S'$ is a strict Zariski-closed subset of $S$.
\end{lemma}
\begin{proof}
As $S$ is a quasi-projective complex variety and $S'$ is a strict definable subset of $S$, the o-minimal Chow theorem of \cite{PS} ensures that it suffices to show that $S'$ is actually a closed-analytic subset of $S$.

To see this, first note that $\exp \vert_\F$ is a local analytic isomorphism as it is a local property which is satisfied by definition on the $\F_i$'s (the maps are obtained as restrictions of a universal covering map to an open set). Futhermore, by definition the diagram
\[
\xymatrix{
\F \ar[r] \ar@/^2pc/[rrr]^\psi \ar[rd] & \tilde{S} \ar[r]^{\tilde{\Phi}} \ar[d] & D \ar[r]^{p_{I^c}} \ar[d] & D_{I^c} \ar[d]\\
& S \ar[r]_\Phi \ar@/_1.5pc/[rr]_{\Phi_{I^c}} & \Gamma \quo D \ar[r]_{q_{I^c}} & \Gamma_{I^c} \quo D_{I^c}
}
\]
commutes. Combining these two facts one sees that for any $s \in S$ and any lift $z \in \F$, one has $\dim_s \Phi_{I^c}^{-1}(\Phi_{I^c}(s)) = \dim_z \F_{\psi(z)}$ so that set theoretically $S'$ has the following description:
\[
S' = \{s \in S : \dim_s \Phi_{I^c}^{-1}(\Phi_{I^c}(s)) > C \}.
\] 
The map $\Phi_{I^c}$ being complex analytic, this shows that $S'$ is a closed complex analytic subset of $S$, and the argument given at the beginning of the proof shows that it is in fact Zariski closed as required.
\end{proof}

Now note that if $z \in \F-\F'$, the intersection $\I \cap (D_I \times \{\psi(z)\}) = \I_{\psi(z)}$ must have minimal dimension at $\tilde{\Phi}_\F(z)$ among the family $(\I_t)_{t \in T}$: otherwise, $\F_{\psi(z)}$ wouldn't have dimension $C$ at $z$ (i.e. minimal dimension among the family $(\F_t)_{t\in T}$), contradicting $z \notin \F'$. So, the quantity $\dim \I_{\psi(z)}$ doesn't depend on the choice of $z \in \F-\F'$ and up to replacing $S$ by its non-empty Zariski open subset $S-S'$, the initially wanted equality of dimensions holds for any couple $(t,t') \in T^2$. This finishes the proof.

\end{proof}
\begin{remark}\label{equid}
It follows from the proof of Lemma \ref{indepcap} that after replacing $S$ by its non-empty Zariski open subset $S - S'$ (and subsequent modifications, see Remark \ref{modif}), all the components of $(g \cdot D_M) \cap (D_I \times \{t\})$ (resp. $\I \cap \{D_I \times \{t\})$) have the same dimension, where $(g,t)$ is any $(\I, D_M, D_I)$-intersecting pair. Indeed, the first part of the proof in fact shows that $\dim_{x} (g \cdot D_M) \cap (D_I \times \{t\}) = \dim_{x'} (g' \cdot D_M) \cap (D_I \times \{t'\})$, where $x$ and $x'$ were arbitrary. For the second claim, this follows from the definition of $S'$ and the discussion at the end of the proof.
\end{remark}
\begin{proof}[Proof of Lemma \ref{reform}]
Let $I \subsetneq \{1, \cdots n\}$ be a strict set of indexes and $(g,t)$ be a $(\I,D_M,D_I)$-intersecting pair. Then both (i) and (ii) follow from the equality
\begin{eqnarray*}
d_\mon(\M,D_M) - d_{I}(\M,D_M) & = & \dim (g \cdot D_M)\cap D_H - \dim(g \cdot D_M) \cap (D_{I} \times \{t\})\\ 
&& + \dim \I - \dim \I \cap (D_{I} \times \{t\}) - \dim D_H + \dim D_{I} \\
& = & \dim p_{I^c}(g \cdot D_M) + \dim p_{I^c}(\I) - \dim D_{I^c} \\
& = & \dim p_{I^c}(g \cdot D_M) + \dim \Phi_{I^c}(S^\an) - \dim D_{I^c}.
\end{eqnarray*}
Here we are implicitly using that $\dim \I - \dim \I \cap (D_I \times \{t\}) = \dim p_{I^c}(\I)$ (resp. $\dim (g \cdot D_M)\cap D_H - \dim(g \cdot D_M) \cap (D_{I} \times \{t\}) = \dim p_{I^c}(g\cdot D_M)$). These equalities use the equidimensionality remarked above.
\end{proof}





We can now turn to the proof of our main results. First note that our general criterion implies the one announced in the introduction for the $\Q$-simple case as explained in the following subsection.

\subsection{Proof of Theorem \ref{mainqsimple} assuming Theorem \ref{main}}
Let $(\M,D_M)$ be a strict Hodge sub-datum of $(\G,D)$. It is immediate from the definitions and Convention \ref{conv} that under the $\Q$-simplicity assumption on $\mon$ along with the assumption that $\mon = \G^\der$, factorwise (resp. strong) $\V$-admissibility is equivalent to (resp. strong) $\V$-admissibility. Moreover, under the assumption $\mon = \G^\der$, the datum $(\M,D_M)$ is automatically full on the $\LL$-factor. Hence, Theorem \ref{mainqsimple} is simply a reformulation of Theorem \ref{main} in the $\Q$-simple monodromy case.

\subsection{Proof of Theorem \ref{main} (i)}\label{pfmain}
\textit{Step 1 (Reduction to $\Phi(S^\an)$ smooth and complex analytic)}. 
First note that by Theorem \ref{algpermap}, the period map $\Phi$ factors through the analytification of some regular dominant map $f : S \rightarrow T$ between quasi-projective complex algebraic varieties. As a consequence, $f(S)$ contains a Zariski-dense Zariski-open subset $U$ of $T$. Denote by $U^\sm$ its smooth locus which is a non-empty Zariski open subset of $U$.



Then, up to replacing $S$ by the non-empty Zariski open subset $f^{-1}(U^\sm)$ of $S$ (which doesn't change the result, see Remark \ref{zariskiopen}), we can assume that $\Phi(S^\an)$ is smooth and locally closed complex analytic, which we do in the sequel. Let us at this point emphasize two consequences of these reductions that will be useful to us. First, with these changes, the image $\I$ of the fundamental set $\F$ by $\tilde{\Phi}_\F$ is a locally closed smooth complex analytic subset of $D$, and we can freely talk about complex analytic irreducible components of intersections with other complex analytic subvarieties of $D$. Furthermore, if $(\M,D_M)$ is a strict Hodge sub-datum of $(\G,D)$, then $D_M$ and $\I$ intersect with expected dimension along some complex analytic irreducible component $U$ inside of $D$ if and only if the equality of dimension
\[
\dim U = \dim \I + \dim D_M - \dim D
\]
holds (without the above argued smoothness, one might in principle only get an inequality in view of Definition \ref{expected}).
We will insist on this in the sequel by saying that $D_M$ and $\I$ intersect inside $D$ with \textit{the} expected dimension along $U$.



\textit{Step 2 (Localisation of the problem)}. Let $(\M, D_M)$ be a factorwise $\V$-admissible strict Hodge sub-datum of $(\G,D)$. Let $s \in S^\an - \HL$ be a Hodge generic point. Fix some $z \in \I$ such that $\pi(z) = \Phi(s) =: x$. As $G := \G(\R)^+$ acts transitively on $D$, there exists an element $g \in G$ such that $z \in g \cdot D_M$. Let $U$ be an irreducible complex analytic component of the intersection of $g \cdot D_M$ and $\I$ 
containing $z$. We want to recall here how, following an idea of Chai \cite[Prop. 1]{chai} (see also \cite[Sect. 10.1]{bku}), one can reduce the proof of the density of the Hodge locus to producing a component $U \subset (g \cdot D_{M}) \cap \mathcal{I}$ which has the expected dimension $d = \dim D_M + \dim \I - \dim D$. 

Suppose we have done this. Then in some neighbourhood $\mathcal{N}$ of a smooth point $u \in U$ the intersection between $g \cdot D_{M}$ and $\mathcal{I}$ is a transverse intersection of smooth manifolds inside $D$. Such intersections are stable under small smooth perturbation (see for instance \cite[Ch. 1, \S6, Ex. 11]{zbMATH03562121}), so one obtains an open neighbourhood $\mathcal{V} \subset G := \G(\R)^+$ such that, for each $g \in \mathcal{V}$, the intersection $(g \cdot D_{M}) \cap \mathcal{I} \cap \mathcal{N}$ is non-empty. By \cite[Theorem 18.2]{lagbor}, $\G$ is unirational as an algebraic variety over $\Q$ so $\G(\Q)^+$ is dense in $G$. We may therefore choose $g_{0} \in \G(\Q)^+ \cap \mathcal{V}$ such that the translate $g_{0} \cdot D_M$ and $\I$ intersect in $D$ with the expected dimension $d$ along some complex analytic irreducible component intersecting $\mathcal{N}$. 

Now, pulling-back these components by the diagram (\ref{Idefdiag}) gives analytic germs of the Hodge locus of $\HLM$ meeting any sufficiently small analytic neighbourhood of the Hodge generic point $s$ in $S^\an$. By density of Hodge generic points for $\V$ in $S^\an$, this gives the desired property of density of the Hodge locus $\HLM$ of type $\M$. Let us emphasize at this point that concluding the density of the typical Hodge locus of type $\M$ as in (ii) requires more work, as the components we constructed above could in principle all be atypical with respect to their generic Mumford-Tate group.

\vspace{0.5em}

\textit{Step 3 (Using Ax-Schanuel to constrain the monodromy of atypical intersections)}.  Let us now prove that under our assumptions, $g\cdot D_M$ and $\I$ intersect inside $D$ with the expected dimension along $U$, that is
\begin{equation}
\label{typeq}
\codim_{D} U = \codim_{D} \I + \codim_{D} g \cdot D_M.
\end{equation}
Assume the opposite for the sake of contradiction. We claim that $\dim D - \dim g \cdot D_M = \dim D_H - \dim (g \cdot D_M) \cap D_H$ as a consequence of the fact that $(\M,D_M)$ is full on the $\LL$-factor.\footnote{Note that in the above equality $D_H$ is both the monodromy factor, and an orbit in $D$ under $H$. We are implicitly using Remark \ref{constfactorrem} and denoting some monodromy orbit $D_H \times \{z_L\}$ by $D_H$ where $z_L \in D_L$ is a lift of the $t_L \in \Gamma_\LL \quo D_L$ in \textit{loc. cit.} such that $z \in D_H \times \{z_L\}$.} Indeed, because $\LL$ is a normal $\Q$-subgroup of $\G^\der$, and $\LL \subset \M$, one finds that $L := \LL(\R)^+ \subset g M g^{-1}$ where $M = \M(\R)^+$. As $L$ acts transitively on $D_L$, it follows from the above containment that $\pr_{D_L}(g \cdot D_M) = D_L$ and the claimed equality of dimension follows from
\[
\dim g\cdot D_M = \dim \pr_{D_L}(g \cdot D_M) + \dim (g \cdot D_M) \cap D_H,
\]
which follows from the equidimensionality argued in Remark \ref{equid}. Therefore, as we have assumed (\ref{typeq}) does not hold, the intersection of $(g \cdot D_M) \cap D_H$ and $\I$ inside the monodromy orbit $D_H$ has bigger dimension than expected along $U$, i.e.
\begin{equation}\label{atyp}
\codim_{D_H} U < \codim_{D_H} \I + \codim_{D_H} (g \cdot D_M) \cap D_H.
\end{equation}
Consider the algebraic subvariety\footnote{Here we recall that an algebraic subvariety of $S \times D_H$ is understood to be a component of an intersection $(S \times D_H) \cap V$, where $V$ is an algebraic subvariety of $S \times \check{D}_H$, where $\check{D}_H$ is the compact dual.} $W = S \times ((g \cdot D_M) \cap D_H)$ of $S \times D_H$, let $\tilde{U}$ be a complex analytic component of $W \cap (S \times_{\Gamma_\mon \quo D_H} D_H)$ containing $(s,z)$ such that the germs of $\pr_{D_H}(\tilde{U})$ and $U$ at $z$ coincide. 
Then, the condition (\ref{atyp}) gives the inequality required in Theorem \ref{axschan}, which then ensures that the projection $\pr_S(\tilde{U})$ of $\tilde{U}$ to $S$ is contained in a strict weakly special subvariety $Y$ of $S$ for $\V$. We take $Y$ to be minimal for the inclusion among those weakly special subvarieties of $S$ containing $\pr_S(\tilde{U})$. Since $Y$ contains $s$ which is Hodge generic, it has generic Mumford-Tate group $\G$, so that, by André's Theorem \ref{andre}, the algebraic monodromy group $\mon_Y$ of $\V \vert_{Y}$ is a strict normal subgroup of $\mon$. Let us write $\mon_Y = \prod_{i \in I} \mon_i$. Applying Remark \ref{monperiodmap} to the restriction of $\Phi$ to the smooth locus of $Y$, one has that $Y^\an$ is contained in an analytic irreducible component of the preimage by $\Phi$ of some analytic subvariety of $\Gamma \quo D$ of the form $\pi(D_I \times \{t\})$. Furthermore, any irreducible subvariety of $S$ contained in the preimage by $\Phi$ of $\pi(D_I \times \{t\})$ must have algebraic monodromy group contained in $\mon_Y$. As a weakly special subvariety is by definition maximal for the inclusion among irreducible subvarieties with the same algebraic monodromy group, this shows that $Y^\an$ is an analytic irreducible component of the preimage by $\Phi$ of $\pi(D_I \times \{t\})$.

We may regard $U$ as a component of the intersection of $(g\cdot D_M) \cap (D_I \times \{t\})$ and $\I \cap (D_I \times \{t\})$ for some choice of $t$. We claim that these two intersect in $D_I \times \{t\}$ with the expected dimension along $U$. Indeed, assuming that they don't, we can apply as above the Ax-Schanuel theorem for $\V \vert_Y$ to the intersection $\tilde{U}$ to conclude that $\pr_S(\tilde{U})$ must be contained in some strict weakly special subvariety $Y'$ of $Y$ for $\V\vert_Y$. Note that $Y'$ is also a weakly special subvariety of $S$ for $\V$ as, via the same reasoning just exhibited for $Y \subset S$, it is the pullback via $\Phi$ of some translate of $\Gamma_{\mon_{Y'}} \quo D_{H_{Y'}} \times \{ t' \}$, and all such varieties are weakly special by the characterization given in Proposition \ref{caracweakly}. 
To sum up, $Y'$ is a weakly special subvariety of $S$ for $\V$, that contains $\pr_S(\tilde{U})$, and that is strictly contained in $Y$. This contradicts the minimality of $Y$, proving the intersection of $(g\cdot D_M) \cap (D_I \times \{t\})$ and $\I \cap (D_I \times \{t\})$ in $D_{I} \times \{ t \}$ has the expected dimension.

We then get the following chain of (in)equalities:
\begin{eqnarray*}
0 & = & \dim U - \dim\big(U \cap (D_{I} \times \{t\})\big) \\ & > & d_H(\M,D_M) - d_I(\M,D_M) \\
& \geqslant & 0,
\end{eqnarray*}
where the first line follows from the fact that $U$ lies in $D_I \times \{t\}$, the second follows from the assumption that $U$ has bigger dimension than expected and the fact that $U\cap (D_{I} \times \{t\})$ has the expected dimension, and the third one is factorwise $\V$-admissibility. This is a contradiction, so the inequality (\ref{atyp}) must fail and the reasoning presented at the beginning of the proof gives the desired density of $\HLM$.

\subsection{Proof of Theorem \ref{main} (ii)}
We keep the notations of the proof of Theorem \ref{main}(i) and assume now that $(\M,D_M)$ is factorwise strongly $\V$-admissible. The proof of (i) gives an open neighbourhood $\mathcal{V}$ of $g$ in $G$ such that for each $g_{0} \in \G(\Q)^+ \cap \mathcal{V}$, we have that $g_{0} \cdot D_M$ and $\I$ intersect along some complex analytic irreducible component with the expected dimension $d$. As explained above, this gives a set of components of $\HLM$, intersecting some neighbourhood of the fixed point $s \in S$ in some dense set, all with the expected dimension. However these components might, in principle, not belong to $\HLM_\typ$ on account of their Mumford-Tate groups being properly contained in some conjugate of $\M$. 

To rule this out we show that after removing a proper closed subset of $\mathcal{V}$ of smaller definable dimension the above constructed components of the Hodge locus of type $\M$ will have the desired property. To construct this closed subset we recall that Mumford-Tate domains in $D$ lie among the fibres of finitely many real-algebraic definable families $f_{k} : \mathcal{D}_{k} \to G$ for $1 \leq k \leq \ell$. Indeed, by \cite[VI.B.12.(i)]{ggk}, for any Mumford-Tate subgroup $\NN$ of $\G$, the domain $D$ contains finitely many $\NN(\R)^+$-orbits of points with Mumford-Tate group equal to $\NN$ and by \cite[Lemma 7.3]{gfunctions}, the group $\G$ contains finitely many $\G(\R)$-conjugacy classes of Mumford-Tate subgroups. More precisely, for each such $k$, there is a Mumford-Tate domain $D^{(k)}$ such that $\mathcal{D}_{k,g} := f_k^{-1}(g) = g \cdot D^{(k)}$, and all Mumford-Tate subdomains of $D$ arise as $\mathcal{D}_{k,g}$ for some choice of $k$ and $g$. We consider only those families for which some fibre of $f_{k}$ is a Mumford-Tate domain properly contained in $D_M$. After redefining the families, we can assume that $D^{(k)}$ is this fiber. In particular, the fibres of $f_{k}$ have strictly smaller dimension than $D_{M}$. For each such $k$, denote by $(\M^{(k)}, D^{(k)})$ the associated Hodge datum, and construct the locus
\[ \mathcal{P}_{k} = \{ (g', x) : x \in g' \cdot D^{(k)} \cap \mathcal{I}, \hspace{0.5em} \dim_{x} (g' \cdot D^{(k)} \cap \mathcal{I}) = d \} \subset G \times D. \]
We claim that the projection $\mathcal{G}_{k}$ of $\mathcal{P}_{k}$ to $G$ intersects $\mathcal{V}$ in a set whose closure has smaller definable dimension. Using definable cell decomposition \cite[Ch. 3, \S2.11]{zbMATH01160037}, it suffices to show that $\mathcal{G}_{k} \cap \mathcal{V}$ does not contain any non-empty open subset $\mathcal{B}$ of $G$. We suppose that it does, and we use this to pick $g' \in \mathcal{B}$ such that some irreducible component $U' \subseteq (g' \cdot D^{(k)}) \cap \mathcal{I}$ of the intersection contains the image $z'$ by $\tilde{\Phi}_\F$ of some lift to $\F$ of a Hodge generic point in $S$. 


Because $g' \in \mathcal{G}_k$, we have by definition that $U'$ has dimension $d$, and using the assumption that $D^{(k)}$ has strictly smaller dimension than $D_{M}$ it follows that $g' \cdot D^{(k)}$ and $\mathcal{I}$ intersect in $D$ with bigger dimension than expected along $U'$. As we assumed $\mon = \G^\der$, this implies that they intersect in $D_H$ with bigger dimension than expected along $U'$. Choose a component $\tilde{U}'$ of the intersection of $S \times g' \cdot D^{(k)}$ with $S \times _{\Gamma_\mon \quo D_H} D_H$ in $S \times D_H$ such that the germs of $\pr_{D_H}(\tilde{U}')$ and $U'$ at $z'$ coincide. Then the Ax-Schanuel Theorem \ref{axschan} gives a strict weakly special subvariety $Y' \subset S$ which contains the projection to $S$ of $\tilde{U}'$. 
Choose $Y'$ to be minimal for the inclusion among the weakly special subvarieties of $S$ for $\V$ containing $\pr_S(\tilde{U}')$. By André's theorem \ref{andre}, since $Y'$ contains a Hodge generic point, $\mon_{Y'}$ is a normal subgroup of $\G^\der$, hence of $\mon$. Write $\mon_{Y'} = \prod_{i \in I} \mon_i$. As in Step $3$ of the proof of (i), the variety $Y'$ is a component of the inverse image under $\Phi$ of $\pi(D_{I} \times \{ t \})$ for some choice of $t$. Applying the Ax-Schanuel theorem for $\V\vert_{Y'}$ to the intersection of $(g' \cdot D^{(k)}) \cap (D_I \times \{t\})$ and $\mathcal{I}$ in $D_I \times \{t\}$ along $U' = U' \cap (D_I \times \{t\})$, one proves as in Step $3$ of the proof of (i) that this intersection has expected dimension.

Now, because $g' \in \mathcal{V}$, $U'$ is also a component along which $(g' \cdot D_M) \cap D_H$ and  $\mathcal{I} \cap D_H$ intersect with expected dimension $d$ inside of $D_H$. Wrapping things up, we get the following chain of (in)equalities:
\begin{eqnarray*}
0 & = & \dim U' - \dim\big(U' \cap (D_{I} \times \{t\})\big) \\ & = & d_\mon(\M,D_M) - d_I(\M^{(k)},D^{(k)}) \\ & \geqslant & d_\mon(\M,D_M) - d_I(\M,D_M)  \\
& > & 0.
\end{eqnarray*}
Here, the first line follows from the fact that $U'$ lies in $D_I \times \{t\}$, the second from the fact that $g' \cdot D_M$ and $\I$ (resp. $g' \cdot D^{(k)} \cap (D_I \times \{t\})$ and $\I \cap (D_I \times \{t\})$) intersect with expected dimension inside of $D_H$ (resp. $D_I \times \{t\}$) along $U'$ (resp. $U' \cap (D_I \times \{t\})$, the third is immediate from definitions and the fourth is factorwise strong $\V$-admissibility. This is a contradiction, and we have proven that $\mathcal{G}_k$ intersects $\mathcal{V}$ in a set whose closure has strictly smaller definable dimension.

Now, by density of $\G(\Q)^+$ in $G$, we can pick some $g_1 \in \G(\Q)^+ \cap (\mathcal{V}-\mathcal{G})$ where $\mathcal{G} = \bigcup_k \mathcal{G}_k$. By construction, pulling back the intersection between $g_1 \cdot D_M$ and $\I$ to $S$ by the diagram (\ref{Idefdiag}) gives rise to components of the typical Hodge locus of type $\M$. Since we can take $g_1$ arbitrarily close to the $g$ we fixed at the beginning of the proof, we therefore can construct components of $\HLM_\typ$ intersecting any neighbourhood of the fixed Hodge generic point $s \in S$. Using the density of the Hodge generic points in $S$ finishes the proof.


\newpage
\printbibliography
\end{document}